\numberwithin{equation}{section}
\newtheorem{thm}[subsection]{Theorem}
\newtheorem{prop}[subsection]{Proposition}
\newtheorem{cor}[subsection]{Corollary}
\newtheorem{lemma}[subsection]{Lemma}
\theoremstyle{definition}  
\newtheorem{rmk}[subsection]{Remark}
\newtheorem*{lemma*}{Lemma}
\newtheorem*{thm*}{Theorem}
\newcommand{\chapter}{\section}
\newcommand{\iso}               {\cong}
\newcommand{\cat}{\EuScript}    
\newcommand{\cA}{{\cat A}}      
\newcommand{\cE}{{\cat E}}
\newcommand{\cH}{{\cat H}}
\newcommand{\cO}{{\mathcal O}}
\newcommand{\cU}{{\cat U}}
\newcommand{\Top}{{\cat Top}}
\newcommand{\sSet}{s{\cat Set}}
\newcommand{\Ab}{{\cat Ab}}
\newcommand{\field}[1]  {\mathbb #1} 
\newcommand{\R}         {\field R}
\newcommand{\Z}         {\field Z}
\newcommand{\bP}        {\field P}
\DeclareMathOperator*{\colim}{colim}
\DeclareMathOperator*{\hocolim}{hocolim}
\newcommand{\ra}{\rightarrow}                   
\newcommand{\lra}{\longrightarrow}              
\newcommand{\la}{\leftarrow}                    
\newcommand{\llra}[1]{\stackrel{#1}{\lra}}      
\newcommand{\we}{\llra{\sim}}                   
\newcommand{\tuborg}{\left\{\begin{array}{ll}}
\newcommand{\sluttuborg}{\end{array}\right.}
\begin{document}

\title{The $RO(G)$-Graded Serre Spectral Sequence}

\author{William Kronholm}
\address{Department of Mathematics and Statistics\\ Swarthmore College\\ Swarthmore, PA 19081 }

\date{\today}
\begin{abstract}
In this paper the Serre spectral sequence of \cite{MS} is extended
from Bredon cohomology to $RO(G)$-graded cohomology for finite
groups $G$.  Special attention is paid to the case $G=\Z/2$ where
the spectral sequence is used to compute the cohomology of certain
projective bundles and loop spaces.


\end{abstract}
\maketitle

\tableofcontents


\section{Introduction}
\label{intro}

In \cite{Bredon}, Bredon created equivariant homology and cohomology
theories of $G$-spaces, now called Bredon homology and Bredon
cohomology, which yield the usual singular homology and cohomology
theories when the group acting is taken to be the trivial group.  In
\cite{LMM}, a cohomology theory for $G$-spaces is constructed that
is graded on $RO(G)$, the Grothendieck ring of virtual
representation of $G$.  This $RO(G)$-graded theory extends Bredon
cohomology in the sense that $H^{\underline{n}}(X)=H^n_{Br}(X)$ when
$\underline{n}$ is the trivial $n$-dimensional representation of
$G$.

Many of the usual tools for computing cohomolgy have their
counterparts in the $RO(G)$-graded setting.  These include
Mayer-Vietoris sequences, K\"unneth theorem, suspension
isomorphisms, etc.  Missing from the $RO(G)$ computational tool box
was an equivariant version of the Serre spectral sequence associated
to a fibration $F \ra E \ra B$.  Also, perhaps partially because of
a lack of this spectral sequence, the theory of equivariant
characteristic classes has not yet been developed.

The main result of this paper is to extend the spectral sequence of
a $G$-fibration given in \cite{MS} from Bredon cohomology to the
$RO(G)$-graded theory with special attention to the case $G=\Z/2$.  The restriction to $G=\Z/2$ is for two reasons.  The first is that there is a map from Voevodsky's motivic cohomology and $RO(\Z/2)$-graded equivariant cohomology, and so one can try to answer questions in motivic cohomology by considering instead the relevant equivariant cohomology.  The second reason is that the general algebra of Mackey functors is extremely complicated for arbitrary groups (even compact Lie), yet the $G=\Z/2$ case is manageable.

A $p$-dimensional real $\Z/2$-representation $V$ decomposes as
$V=(\R^{1,0})^{p-q} \oplus (\R^{1,1})^q =\R^{p,q}$ where $\R^{1,0}$
is the trivial representation and $\R^{1,1}$ is the nontrivial
1-dimensional representation.  Thus the $RO(\Z/2)$-graded theory is
a bigraded theory, one grading measuring dimension and the other
measuring the number of ``twists''.  In this case, we write
$H^{V}(X;M)=H^{p,q}(X;M)$ for the $V^{\text{th}}$ graded component
of the $RO(\Z/2)$-graded equivariant cohomology of $X$ with
coefficients in a Mackey functor $M$. Here is the spectral sequence:
\begin{thm*}
If $f\colon E \ra X$ is a fibration of $\Z/2$ spaces, then for every
$r \in \Z$ and every Mackey functor $M$ there is a natural spectral
sequence with $$E^{p,q}_2=H^{p,0}(X;\cH^{q,r}(f;M)) \Rightarrow
H^{p+q,r}(E;M).$$
\end{thm*}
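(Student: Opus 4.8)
The plan is to mimic the construction of the Bredon-cohomology Serre spectral sequence of \cite{MS}, running it in $RO(\Z/2)$-graded cohomology while holding the twist index $r$ fixed throughout. The decisive point is that one filters the base by an \emph{ordinary} (untwisted) $\Z/2$-CW structure, so that every cell has the form $((\Z/2)/H)\times D^{p}$ with trivial action on the disk. Suspension across such a cell is suspension by the untwisted sphere $S^{p,0}$, which shifts the bidegree by $(p,0)$; in particular it never changes the twist. This is exactly what makes the second index of the base cohomology equal to $0$ and lets $r$ ride along as a spectator in the fiber.

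Concretely, first I would replace $X$ by a $\Z/2$-CW approximation (as in \cite{MS}) and filter it by skeleta $\emptyset = X^{(-1)}\subseteq X^{(0)}\subseteq\cdots$, then pull this filtration back along $f$ to obtain $E^{(p)}=f^{-1}(X^{(p)})$. Applying $H^{\ast,r}(\blank;M)$ to the tower of pairs $(E^{(p)},E^{(p-1)})$ and assembling the resulting long exact sequences into an exact couple produces a spectral sequence with $E_1^{p,q}=H^{p+q,r}(E^{(p)},E^{(p-1)};M)$. Because the filtration is exhaustive and bounded below and $H^{\ast,r}$ sends the relevant equivariant wedges to products, this spectral sequence converges to $H^{p+q,r}(E;M)$.

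The heart of the argument is the identification of the $E_1$- and $E_2$-pages. Over each untwisted cell the base is equivariantly contractible onto an orbit $(\Z/2)/H$, so the homotopy lifting property makes $f$ fiber-homotopy trivial there; excision then expresses $H^{p+q,r}(E^{(p)},E^{(p-1)};M)$ as a product, over the $p$-cells, of the $RO(\Z/2)$-graded cohomologies of the corresponding fibers. Invoking the suspension isomorphism for $S^{p,0}$ rewrites each factor as $H^{q,r}\!\bigl(f^{-1}(\ast);M\bigr)$, so that $E_1^{p,\ast}$ becomes the group of Bredon $p$-cochains of $X$ with coefficients in the coefficient system $(\Z/2)/H\mapsto H^{q,r}(f^{-1}(\ast_{H});M)$, namely $\cH^{q,r}(f;M)$. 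A diagram chase identifies $d_1$ with the Bredon coboundary, whence
$$E_2^{p,q}=H^{p}\bigl(\text{Bredon cochains of }X;\cH^{q,r}(f;M)\bigr)=H^{p,0}\bigl(X;\cH^{q,r}(f;M)\bigr),$$
using the defining identification $H^{\ast,0}=H^{\ast}_{Br}$ recalled in the introduction. Naturality in $M$ and in the fibration is immediate from the functoriality of pulling back the skeletal filtration.

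I expect the main obstacle to be verifying that $\cH^{q,r}(f;M)$ is a bona fide Bredon coefficient system and that the $E_1$-differential really is its coboundary. This is the step where the equivariance bites: one must check that the fiberwise $RO(\Z/2)$-graded cohomology assembles into a functor on the orbit category, with the correct Mackey-functor structure on each value, compatibly with the attaching maps, and that the suspension isomorphisms used above are natural across cells of different isotropy. Once this local-coefficient bookkeeping is in place, the twist $r$ is seen to be untouched by the untwisted suspensions, and the spectral sequence takes the asserted form.
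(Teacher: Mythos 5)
Your construction is viable in outline, but it is a genuinely different route from the paper's — indeed the paper explicitly declines to take it, stating that the spectral sequence ``will not be deduced from lifting a cellular filtration of $X$ to one on $E$.'' Instead, the paper realizes $E$ as a homotopy colimit: it forms the diagram $\Gamma_f \colon \Delta_G(X) \ra \Top_G$, $\sigma \mapsto \sigma^*(E)$, proves as a technical lemma (Lemma \ref{lemma:hocowe}, via equivariant simplicial methods) that $\hocolim_{\Delta_G(X)} \Gamma_f \ra E$ is a weak equivalence, and then simply quotes the homotopy colimit spectral sequence $H^p(I^{op};\cE^q(D)) \Rightarrow \cE^{p+q}(\hocolim D)$ for the theory $H^{*,r}(-;M)$, together with \cite[Theorem 3.2]{MS}, which identifies cohomology of the category $\Delta_G(X)^{op}$ with Bredon cohomology $H^{p,0}(X;-)$ for local coefficient systems. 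What the paper's route buys: the $E_2$-identification, the treatment of local coefficients, and the asserted naturality all come for free from functoriality of $\Delta_G(-)$, $\hocolim$, and the MS comparison, with no choices of cell structures or trivializations. What your route buys: it is more elementary and geometric, and it makes transparent the key structural point (which you correctly isolate) that untwisted cells give untwisted suspensions, so the base contributes only $(p,0)$-degrees and the twist $r$ is inert.

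Three steps in your sketch need more than you give them, though none is fatal. First, convergence: unlike the nonequivariant case, $G$-connectivity of the pair $(E,E^{(p)})$ does not force vanishing of $H^{n,r}$ for $n \le p$, because $RO(\Z/2)$-graded cohomology of orbits is not concentrated in degree $0$ (witness the negative cone of $H^{*,*}(pt;\underline{\Z/2})$); for fixed $r$ the orbit cohomology is bounded in the first index, so the exhaustiveness/Hausdorff argument survives with a shift, but this must be argued, not asserted. Second, your $E_2$-page is cellular cohomology with local coefficients, whereas the $H^{p,0}(X;\cH^{q,r}(f;M))$ in the statement is defined, following \cite{MS}, via the category $\Delta_G(X)^{op}$; bridging the two is exactly \cite[Theorem 3.2]{MS}, and your appeal to the identification $H^{*,0}=H^*_{Br}$ covers constant Mackey coefficients, not local systems. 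Third, naturality is not ``immediate'': a map of fibrations need not respect your chosen CW approximations and cell structures, so you must prove independence of those choices (or pass to a choice-free singular model — which is essentially what pushes the paper to the homotopy colimit formulation in the first place).
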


This is a spectral sequence that takes as inputs the Bredon
cohomology of the base space with coefficients in the local
coefficient system $\cH^{q,r}(f;M)$ and converges to the
$RO(\Z/2)$-graded cohomology of the total space.

This is really a family of spectral sequences, one for each integer
$r$. If the Mackey functor $M$ is a ring Mackey functor, then this
family of spectral sequences is equipped with a tri-graded
multiplication. If $a \in  H^{p,0}(X;\cH^{q,r}(f;M))$ and $b \in
H^{p',0}(X;\cH^{q',r'}(f;M))$, then $a\cdot b \in
H^{p+p',0}(X;\cH^{q+q',r+r'}(f;M))$.  There is also an action of
$H^{*,*}(pt;M)$ so that if $\alpha \in H^{q',r'}(pt;M)$ and $a \in
H^{p,0}(X;\cH^{q,r}(f;M))$, then $\alpha \cdot a \in
H^{p,0}(X;\cH^{q+q',r+r'}(f;M))$.

Under certain connectivity assumptions on the base space, the local
coefficients $\cH^{q,r}(f;M)$ are constant, and the spectral
sequence becomes the following. This result is restated and proved
as Theorem \ref{thm:ss}.
\begin{thm*}
If $X$ is equivariantly 1-connected and $f \colon E \ra X$ is a
fibration of $\Z/2$ spaces with fiber $F$, then for every $r \in \Z$
and every Mackey functor $M$ there is a spectral sequence with
$$E^{p,q}_2=H^{p,0}(X;\underline{H}^{q,r}(F;M)) \Rightarrow
H^{p+q,r}(E;M).$$
\end{thm*}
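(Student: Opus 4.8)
The plan is to fix the twist grading $r$ and observe that the single-graded functor $k^n(-) := H^{n,r}(-;M)$ is an ordinary (integer-graded) $\Z/2$-equivariant cohomology theory, so that the Bredon-cohomology Serre spectral sequence of \cite{MS} applies essentially verbatim with $k^*$ as input. The crucial point is that a $\Z/2$-CW structure on the base $X$ is built exclusively from cells of the form $(\Z/2)/H \times D^n$ with $D^n$ carrying trivial action; consequently the only suspensions arising in the skeletal filtration are suspensions by the trivial representation sphere $S^{1,0}$. Since the $RO(\Z/2)$-graded suspension isomorphism gives $H^{n,r}(Y;M) \cong H^{n+1,r}(\Sigma^{1,0} Y; M)$, the functor $k^*$ satisfies the suspension axiom in the integer variable while leaving $r$ fixed. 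Together with exactness of the long exact sequence of a pair and the wedge axiom, both of which hold for $H^{*,*}(-;M)$ in each fixed twist, this exhibits $k^*$ as a legitimate input to the construction of \cite{MS}.

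First I would fix a $\Z/2$-CW structure on $X$ and pull back the skeletal filtration along $f$, setting $E_p = f^{-1}(\sk_p X)$. This produces an exhaustive filtration $\cdots \subseteq E_{p-1} \subseteq E_p \subseteq \cdots$ of $E$, and the exact couple assembled from the long exact sequences of the pairs $(E_p, E_{p-1})$ in the theory $k^*$ yields a spectral sequence converging to $k^{p+q}(E) = H^{p+q,r}(E;M)$, convergence being the standard consequence of exhaustiveness, with the usual $\lim^1$ caveat when $X$ is infinite-dimensional.

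The heart of the argument is the identification of the $E_2$ page. Following \cite{MS}, the $E_1$ term is the cellular cochain complex of $X$ with values in the coefficient system assigning to an orbit $(\Z/2)/H$ the theory $k^*$ applied to the fiber of the fixed-point fibration $E^H \ra X^H$, and $E_2$ is the Bredon cohomology of $X$ with coefficients in this system. For fixed $r$ this is precisely the local system $\cH^{q,r}(f;M)$, so the general spectral sequence reads $E_2^{p,q} = H^{p,0}(X; \cH^{q,r}(f;M)) \Rightarrow H^{p+q,r}(E;M)$. To obtain the stated theorem I would then invoke equivariant $1$-connectedness: when $X^H$ is $1$-connected for every $H \le \Z/2$, the monodromy acts trivially and the local system reduces to the constant coefficient system $\underline{H}^{q,r}(F;M)$, whence $E_2^{p,q} = H^{p,0}(X; \underline{H}^{q,r}(F;M))$.

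The main obstacle I anticipate is not the skeletal bookkeeping but the verification that $H^{*,r}(-;M)$ genuinely behaves as an ordinary equivariant cohomology theory with the correct orbit-wise coefficient system, in particular that its fiberwise value over an orbit $(\Z/2)/H$ is computed by the fixed-point fibration $E^H \ra X^H$ and that the resulting system is the one appearing in Bredon cohomology. Showing the local system is constant under $1$-connectivity requires that the $RO(\Z/2)$-graded cohomology of the fiber be unchanged by monodromy, which rests on the fibers over points of a fixed orbit type being equivariantly homotopy equivalent compatibly with the $\Z/2$-action; assembling this coherently across the two orbit types $(\Z/2)/e$ and $(\Z/2)/(\Z/2)$ is where the genuinely equivariant content lies.
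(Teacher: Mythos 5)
Your strategy---filter $X$ by skeleta, pull the filtration back to $E$, and run the exact couple for the fixed-twist theory $k^*=H^{*,r}(-;M)$, using that $\Z/2$-CW cells produce only trivial suspensions---is genuinely different from the paper's construction, which deliberately avoids cellular filtrations: there the spectral sequence is the homotopy colimit spectral sequence for the diagram $\Gamma_f\colon \Delta_G(X)\ra \Top_G$, combined with Lemma \ref{lemma:hocowe} (the composite $\hocolim \Gamma_f \ra \colim\Gamma_f\ra E$ is an equivariant weak equivalence) and \cite[Theorem 3.2]{MS} to convert cohomology of $\Delta_G(X)^{op}$ into $H^{p,0}(X;-)$, followed by Lemma \ref{lemma:local} and the computation $x_H^*E=(G/H)\times F$ to pass to constant coefficients. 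Your route could in principle be carried out, but as written it fails at exactly the step you call the heart of the argument: you identify the coefficient system as the one assigning to $(\Z/2)/H$ the value of $k^*$ on the fiber of the fixed-point fibration $E^H\ra X^H$, i.e.\ on $F^H$, and you assert this equals $\cH^{q,r}(f;M)$. It does not. By definition $\cH^{q,r}(f;M)(\sigma)=H^{q,r}(\sigma^*E;M)$, and what the skeletal filtration actually produces at a cell $(\Z/2)/H\times D^p$ (after fiber-homotopy trivializing over the cell, excising, and desuspending) is $k^q$ of the preimage of the \emph{whole orbit}, $f^{-1}\bigl(\sigma((\Z/2)/H)\bigr)\cong (\Z/2)\times_H F_x$---the $H$-equivariant cohomology of the fiber as an $H$-space, not the cohomology of its fixed points. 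Concretely, at the fixed orbit the theorem requires $\underline{H}^{q,r}(F;M)(G/G)=H^{q,r}(F;M)$ with $F$ carrying its involution (for $F=S^{1,1}$ this differs from the cohomology of $F^{\Z/2}=S^0$), while at the free orbit it requires $H^{q,r}(\Z/2\times F;M)$, essentially the singular cohomology of $F$, not the equivariant cohomology of $F$ that your prescription yields there. Since $RO(\Z/2)$-graded cohomology is not computed fixed-set-by-fixed-set, no coefficient system built from the fibrations $E^H\ra X^H$ alone can give the stated $E_2$ page.

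A secondary gap: the claim that the construction of \cite{MS} applies ``essentially verbatim with $k^*$ as input'' overstates what is available. That construction concerns Bredon cohomology realized as cohomology of the category $\Delta_G(X)$ with a coefficient system; it does not accept an arbitrary equivariant cohomology theory as coefficients, which is precisely why the paper had to develop the homotopy-colimit machinery of Section \ref{sec:homotopical} instead of quoting \cite{MS} for the spectral sequence itself. So in your approach the identification of the exact-couple $E_1$ with the Bredon cellular cochain complex in local coefficients---including matching $d_1$ with the twisted cellular differential and then running the monodromy argument you correctly flag at the end---would all have to be proved by hand, and with the coefficient system misidentified, repairing the monodromy step alone would not rescue the argument.
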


The coefficient systems $\cH^{q,r}(f;M)$ and
$\underline{H}^{q,r}(F;M)$ that appear in the spectral sequence are
explicitly defined in the next section.  They are the equivariant
versions of the usual local coefficient systems that arise when
working with the usual Serre spectral sequence.

This spectral sequence is rich with information about the fibration
involved, even in the case of the trivial fibration $id \colon X \ra
X$.  In this case, the $E_2$ page takes the form
$E^{p,q}_2=H^{p,0}(X;\underline{H}^{q,r}(pt;M)) \Rightarrow
H^{p+q,r}(X;M)$.  Set $M = \underline{\Z/2}$ and consider the case
$r=1$. Then $H^{p,0}(X;\underline{H}^{q,r}(pt;\underline{\Z/2}))=0$
if $q \neq 0,1$.  The case $q=0$ gives
$H^{p,0}(X;\underline{H}^{0,1}(pt;\Z/2))=H^{p,0}(X;\underline{\Z/2})$,
and if $q=1$,
$H^{p,0}(X;\underline{H}^{1,1}(pt;\underline{\Z/2}))=H^p_{sing}(X^G;\Z/2)$.
The spectral sequence then has just two non-zero rows as shown in
Figure \ref{fig:ssex} below.

\begin{figure}[htpb]
\centering
\begin{picture}(340,180)(-30,-10)
\multiput(0,0)(70,0){5}{\line(0,1){150}}
\multiput(0,0)(0,30){6}{\line(1,0){280}}
\put(8,10){$H^{0,0}(X)$} \put(8,40){$H^{0}_{sing}(X^G)$}
\put(30,70){$0$} \put(30,100){$0$} \put(30,130){$0$}

\put(78,10){$H^{1,0}(X)$} \put(78,40){$H^{1}_{sing}(X^G)$}
\put(99,70){$0$} \put(99,100){$0$} \put(99,130){$0$}

\put(148,10){$H^{2,0}(X)$} \put(148,40){$H^{2}_{sing}(X^G)$}
\put(169,70){$0$} \put(169,100){$0$} \put(169,130){$0$}

\put(218,10){$\cdots$} \put(218,40){$\cdots$}

\put(-2,-2){$\bullet$} \thicklines \put(0,0){\vector(0,1){155}}
\put(0,0){\vector(1,0){285}}

\put(280,-10){$p$} \put(-10,150){$q$} \thinlines
\end{picture}
\caption{The $r=1$ spectral sequence for $id \colon X \ra X$.}
\label{fig:ssex}
\end{figure}

As usual, the two row spectral sequence yields the following curious
long exact sequence:

$0 \ra H^{0,0}(X) \ra H^{0,1}(X) \ra 0 \ra H^{1,0}(X) \ra H^{1,1}(X)
\ra H^0_{sing}(X^G) \ra H^{2,0}(X) \ra H^{2,1}(X) \ra
H^1_{sing}(X^G) \ra \cdots $

Now, to any equivariant vector bundle $f\colon E \ra X$, there is an
associated equivariant projective bundle $\bP(f)\colon \bP(E)\ra X$
whose fibers are lines in the fibers of the original bundle.
Applying the above spectral sequence to this new bundle yields the
following result, which appears later as Theorem \ref{thm:localss}.
\begin{thm*}
If $X$ is equivariantly 1-connected and $f\colon E \ra X$ is a
vector bundle with fiber $\R^{n,m}$ over the base point, then the
spectral sequence of Theorem \ref{thm:ss} for the bundle
$\bP(f)\colon \bP(E)\ra X$ with constant $M=\underline{\Z/2}$
coefficients ``collapses".
\end{thm*}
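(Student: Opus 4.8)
The plan is to prove an equivariant Leray--Hirsch / projective bundle theorem: exhibit classes in $H^{*,*}(\bP(E);\underline{\Z/2})$ whose restrictions to the fiber $\bP(\R^{n,m})$ generate the fiber cohomology freely over $H^{*,*}(pt;\underline{\Z/2})$, and then use the multiplicative structure of the spectral sequence to force every differential to vanish.

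First I would record the cohomology of the fiber. The fiber of $\bP(f)$ over the basepoint is the equivariant projective space $\bP(\R^{n,m})$, and I would invoke the known computation that $H^{*,*}(\bP(\R^{n,m});\underline{\Z/2})$ is a free module over the point cohomology $H^{*,*}(pt;\underline{\Z/2})$ on $n$ generators, namely the powers $1,c,c^2,\dots,c^{n-1}$ of the Euler class $c\in H^{1,1}$ of the tautological line bundle. Because $X$ is equivariantly $1$-connected, Theorem \ref{thm:ss} applies and the local coefficient systems $\underline H^{q,r}(F;\underline{\Z/2})$ are constant; combined with freeness, this identifies the $E_2$-page as a free module over the image of $H^{*,0}(X;\underline{\Z/2})$ (the bottom row $q=0$) generated by the classes $[c^i]$, these generators being distributed across the family of $r$-pages according to the bidegree of $c^i$ and its $H^{*,*}(pt;\underline{\Z/2})$-multiples.

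The heart of the argument is to show that each $[c^i]$ is a \emph{permanent cycle}. For this I would construct a global Euler class $\tilde c\in H^{1,1}(\bP(E);\underline{\Z/2})$ for the tautological line bundle $\gamma\ra\bP(E)$, using the equivariant Thom isomorphism and the resulting Gysin sequence for $\gamma$; restricting $\gamma$ to a fiber recovers the tautological bundle over $\bP(\R^{n,m})$, so $\tilde c$ restricts to $c$ and hence $\tilde c^{\,i}$ restricts to $c^i$. Any fiber class lying in the image of the edge homomorphism $H^{*,*}(\bP(E))\ra E_\infty^{0,*}\subseteq E_2^{0,*}$ is a permanent cycle, so each $[c^i]$ survives. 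The collapse then follows formally: for $r\ge 2$ the differential $d_r$ vanishes on the bottom row for dimensional reasons (its target has negative fiber degree), and since the tri-graded multiplication makes the $E_2$-page a module over the bottom row generated by the permanent cycles $[c^i]$, the Leibniz rule gives $d_r(b\cdot[c^i])=d_r(b)\,[c^i]\pm b\,d_r([c^i])=0$ for every $b$ in the bottom row. Hence all differentials vanish and the family of spectral sequences collapses.

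The main obstacle is the construction of the global Euler class $\tilde c$ and, more broadly, setting up the equivariant Leray--Hirsch mechanism in a setting where a theory of equivariant characteristic classes has not yet been developed (as the introduction notes). Two points demand care beyond the classical case. First, the coefficient ring $H^{*,*}(pt;\underline{\Z/2})$ has both a positive and a negative cone, so I must verify that the fiber cohomology is genuinely free and that the stated generating set really spans it as a module. Second, the number of twists $m$ in the fiber representation $\R^{n,m}$ shifts the bidegrees of the relevant classes, so I would check that the description of $H^{*,*}(\bP(\R^{n,m});\underline{\Z/2})$ in terms of powers of a single Euler class is uniform enough that one global class $\tilde c$ together with its powers suffices to generate everything. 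Once these are in place, the Leibniz argument above is routine.
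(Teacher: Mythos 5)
Your proposal founders on the structure of the fiber cohomology, which is genuinely different from the nonequivariant case. The claim that $H^{*,*}(\bP(\R^{n,m});\underline{\Z/2})$ is free over $H^{*,*}(pt;\underline{\Z/2})$ on powers $1,c,\dots,c^{n-1}$ of a single Euler class $c\in H^{1,1}$ is false: by Lemma \ref{lemma:RPmodule} the free module generators sit in bidegrees $(0,0),(1,1),(2,1),(3,2),(4,2),\dots$, and by Theorem \ref{thm:rpinfty} and Proposition \ref{prop:ringRP} the ring is a truncation of $H^{*,*}(pt)[a,b]/(a^2=\rho a+\tau b)$ with $\deg(a)=(1,1)$, $\deg(b)=(2,1)$. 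Powers of the Euler class $a$ lie in bidegrees $(i,i)$, and since $\tau$ is not invertible, the relation $a^2=\rho a+\tau b$ shows that $b$ does \emph{not} lie in the $H^{*,*}(pt)$-span of $\{1,a,a^2,\dots\}$. So no single global class $\tilde c$ can have powers restricting to a module basis of the fiber; you would need a second global class restricting to $b$ (and, for general $\R^{n,m}$, classes restricting to each $c_{n,m}$), and the Gysin/Thom mechanism produces no such class --- $b$ is not the Euler class of any line bundle. This is exactly the point where the paper takes a different route: it stabilizes to $E\oplus L$ so that $\bP(E\oplus L)\ra X$ admits a section $s$, uses $s^*$ (an isomorphism on the relevant coefficient systems $\underline{H}^{0,1}$) to force $da=0$, then kills $db$ \emph{algebraically} via $0=d(a^2)=\rho\,da+\tau\,db$ together with injectivity of multiplication by $\tau$, and handles general fibers by induction using the relation $a\,c_{n-1,m}=\tau c_{n,m}$. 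None of this has a Leray--Hirsch substitute of the kind you describe.

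Your endgame also proves too much, which is a second sign something is wrong. You conclude that \emph{all} differentials vanish, but the theorem's notion of ``collapse'' explicitly allows the nonzero differentials inherited from the trivial fibration $id\colon X\ra X$; as the introduction's two-row example shows, that spectral sequence has a potentially nonzero $d_2\colon H^p_{sing}(X^G)\ra H^{p+2,0}(X)$, and these differentials map into the spectral sequence for $\bP(f)$. The Leibniz argument breaks because the $E_2$-page is \emph{not} generated over the bottom row by fiber classes: a class in $H^{p,0}(X;\underline{H}^{1,1}(pt))\cong H^p_{sing}(X^G)$ is not a product of a bottom-row class with a point class, and it is precisely such classes that carry the surviving differentials. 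Finally, the ``dimensional reasons'' step (target has negative fiber degree, hence vanishes) fails in the $RO(\Z/2)$-graded setting for negative weights, since $H^{q,r}(pt;\underline{\Z/2})$ is nonzero in a cone with $q<0$; permanence of the bottom row has to be argued per weight, not formally.
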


Here, when we say the spectral sequence collapses, we do not mean it
collapses in the usual sense.  Each fibration $f\colon E \ra X$ maps
to the trivial fibration $id \colon X \ra X$ in an obvious way.
Naturality then provides a map from the spectral sequence for $id
\colon X \ra X$ to the spectral sequence for $f\colon E \ra X$.  In
the above theorem, the spectral sequence ``collapses" in the sense
that the only nonzero differentials are those arising from the
trivial fibration $id \colon X \ra X$.

In non-equivariant topology, the Leray-Serre spectral sequence gives
rise to a description of characteristic classes of vector bundles.
Consider the universal bundle $E_n \ra G_n$ over the Grassmannian of
$n$-planes in $\R^\infty$.  Forming the associated projective bundle
$\bP(E_n) \ra G_n$ yields a fiber bundle with fiber $\R\bP^\infty$.
Applying the Leray-Serre spectral sequence to this projective bundle
yields characteristic classes of $E_n$ as the image of the
cohomology classes $1, z, z^2, \dots \in
H^*_{sing}(\R\bP^\infty;\Z/2)$ under the transgressive
differentials.  Since this universal bundle classifies vector
bundles, characteristic classes of arbitrary bundles can be defined
as pullbacks of the characteristic classes, $c_i \in H^i(G_n;\Z/2)$,
of the universal bundle.  It would be nice to adapt this
construction to the $\Z/2$ equivariant setting.  However, the
equivariant space $G_n((\R^{2,1})^\infty) = G_n(\cU) = G_n$ is not
1-connected, and so the spectral sequence is not as easy to work
with.  It seems that there is no way to avoid using local
coefficient systems in this setting.

Section \ref{sec:prelims} provides some of the definitions and
basics that are needed for this paper.  The main theorem is stated
and proved in section \ref{sec:SS}, making use of some technical
homotopical details that are provided in section
\ref{sec:homotopical}.  In section \ref{sec:ProjBundles}, the
spectral sequence is then applied to compute the cohomology of a
projective bundle $\bP(E)$ associated to a vector bundle $E \ra X$.
Section \ref{sec:AH} provides an application of the
$RO(\Z/2)$-graded Serre spectral sequence to loop spaces on certain
spheres.

\section{Preliminaries}
\label{sec:prelims}

\label{chapter:prelims} The section contains some of the basic
machinery and notation that will be used throughout the paper.  In
this section, let $G$ be any finite group.

A $G$-CW complex is a $G$-space $X$ with a filtration $X^{(n)}$
where $X^{(0)}$ is a disjoint union of $G$-orbits and $X^{(n)}$ is
obtained from $X^{(n-1)}$ by attaching cells of the form $G/H_\alpha
\times \Delta^n$ along maps $f_\alpha \colon G/H_\alpha \times
\partial\Delta^n \ra X^{(n-1)}$.  The space $X^{(n)}$ is referred to
as the $n$-skeleton of $X$.  Such a filtration on a space $X$ is
called a cell structure for $X$.

Given a $G$-representation $V$, let $D(V)$ and $S(V)$ denote the
unit disk and unit sphere, respectively, in $V$ with action induced
by that on $V$.  A $\text{Rep}(G)$-complex is a $G$-space $X$ with a
filtration $X^{(n)}$ where $X^{(0)}$ is a disjoint union of
$G$-orbits and $X^{(n)}$ is obtained from $X^{(n-1)}$ by attaching
cells of the form $D(V_\alpha)$ along maps $f_\alpha \colon
S(V_\alpha) \ra X^{(n-1)}$ where $V_\alpha$ is an $n$-dimensional
real representation of $G$.  The space $X^{(n)}$ is again referred
to as the $n$-skeleton of $X$, and the filtration is referred to as
a cell structure.

Let $\Delta_G(X)$ be the category of equivariant simplices of the
$G$-space $X$.  Explicitly, the objects of $\Delta_G(X)$ are maps
$\sigma\colon G/H \times \Delta^n \ra X$.  A morphism from $\sigma$
to $\tau\colon G/K \times \Delta^m \ra X$ is a pair $(\varphi,
\alpha)$ where $\varphi\colon G/H \ra G/K$ is a $G$-map and
$\alpha\colon \Delta^n \ra \Delta^m$ is a simplicial operator such
that $\sigma = \tau \circ (\varphi \times \alpha)$.

Let $\Pi_G(X)$ be the fundamental groupoid of $X$.  Explicitly, the
objects of $\Pi_G(X)$ are maps $\sigma\colon G/H \ra X$ and a
morphism from $\sigma$ to $\tau\colon  G/K \ra X$ is a pair
$(\varphi, \alpha)$ where $\varphi \colon G/H \ra G/K$ is a $G$-map
and $\alpha$ is a $G-$homotopy class of paths from $\sigma$ to $\tau
\circ \varphi$.

There is a forgetful functor $\pi\colon  \Delta_G(X) \ra \Pi_G(X)$
that sends $\sigma\colon G/H \times \Delta^n \ra X$ to $\sigma\colon
G/H \ra X$ by restricting to the last vertex $e^n$ of $\Delta^n$.  A
morphism $(\varphi, \alpha)$ in $\Delta_G(X)$ is restricted to
$(\varphi, \alpha)$ in $\Pi_G(X)$ by restricting $\alpha$ to the
linear path from $\alpha(e^n)$ to $e^m$ in $\Delta^m$. There is a
further forgetful functor to the orbit category $\cO(G)$, which will
also be denoted by $\pi$, as shown below.

\[\xymatrix{\Delta_G(X) \ar[r]^{\pi} & \Pi_G(X) \ar[r]^{\pi} & \cO(G)}\]

A coefficient system on $X$ is a functor $M\colon \Delta_G(X)^{op}
\ra \Ab$.  We say that the coefficient system $M$ is a local
coefficient system if it factors through the forgetful functor to
$\Pi_G(X)^{op}$ (up to isomorphism).  If $M$ further factors through
$\cO(G)^{op}$, then we call $M$ a constant coefficient system.

For the precise definition of a Mackey functor for $G=\Z/2$, the
reader is referred to \cite{Alaska} or \cite{DuggerKR}.  A summary
of the important aspects of a $\Z/2$ Mackey functor is given here.
The data of a Mackey functor are encoded in a diagram like the one
below.

\[\xymatrix{ M(\Z/2) \ar@(ur,ul)[]^{t^*} \ar@/^/[r]^(0.6){i_*} & M(e) \ar@/^/[l]^(0.4){i^*}} \]

The maps must satisfy the following four conditions.
\begin{enumerate}
\item $(t^*)^2 = id$
\item $t^*i^*=i^*$
\item $i_*(t^*)^{-1}=i_*$
\item $i^*i_*=id+t^*$
\end{enumerate}

According to \cite{LMM}, each Mackey functor $M$ uniquely
determines an $RO(G)$-graded cohomology theory characterized by
\begin{enumerate}
\item $H^{\underline{n}}(G/H;M) =\begin{cases}
M(G/H) & \text{ if } n=0 \\
0 & \text{otherwise}\end{cases}$
\item The map $H^0(G/K;M) \ra H^0(G/H;M)$ induced by $i \colon G/H \ra G/K$ is the transfer map $i^*$ in the Mackey functor.
\end{enumerate}

Given a Mackey functor $M$, a $G$-representation $V$, and a
$G$-space $X$, we can form a coefficient system
$\underline{H}^V(X;M)$.  This coefficient system is determined on
objects by $\underline{H}^V(X;M)(G/H)=H^V(X \times G/H;M)$ with maps
induced by those in $\cO(G)$.

In this paper, $G$ will usually be $\Z/2$ and the Mackey functor
will almost always be constant $M=\underline{\Z/2}$ which has the
following diagram.

\[\xymatrix{ \Z/2 \ar@(ur,ul)[]^{id} \ar@/^/[r]^{0} & \Z/2 \ar@/^/[l]^{id}} \]

With these constant coefficients, the $RO(\Z/2)$-graded cohomology
of a point is given by the picture in Figure \ref{fig:pt}.

\begin{figure}[htpb]
\centering
\begin{picture}(100,100)(-100,-100)
\put(-100,-50){\vector(1,0){100}}
\put(-50,-100){\vector(0,1){100}}

\put(-50, -51){\line(0,1){40}} \put(-50, -51){\line(1,1){40}}
\put(-50, -91){\line(0,-1){20}} \put(-50, -91){\line(-1,-1){20}}
\put(-50, -51){\circle*{2}}

\multiput(-90,-51)(20,0){5}{\line(0,1){3}}
\multiput(-52,-90)(0,20){5}{\line(1,0){3}}
\put(-49,-57){$\scriptscriptstyle{0}$}
\put(-29,-57){$\scriptscriptstyle{1}$}
\put(-9,-57){$\scriptscriptstyle{2}$}
\put(-69,-57){$\scriptscriptstyle{-1}$}
\put(-89,-57){$\scriptscriptstyle{-2}$}
\put(-57,-47){$\scriptscriptstyle{0}$}
\put(-57,-27){$\scriptscriptstyle{1}$}
\put(-57,-7){$\scriptscriptstyle{2}$}

\put(-48,-35){$\tau$} \put(-30,-35){$\rho$} \put(-10,-15){$\rho^2$}
\put(-30,-17){$\tau\rho$} \put(-58, -90){$\theta$} \put(-48,
-102){$\frac{\theta}{\tau}$} \put(-78, -102){$\frac{\theta}{\rho}$}

\put(-58,0){${q}$} \put(0,-60){${p}$}

\multiput(-50, -30)(20,20){2}{\circle*{2}} \multiput(-50,
-50)(20,20){3}{\circle*{2}} \put(-50, -10){\circle*{2}} \put(-50,
-90){\circle*{2}} \put(-50, -110){\circle*{2}} \put(-70,
-110){\circle*{2}}

\end{picture}

\caption{$H^{*,*}(pt;\underline{\Z/2})$} \label{fig:pt}
\end{figure}
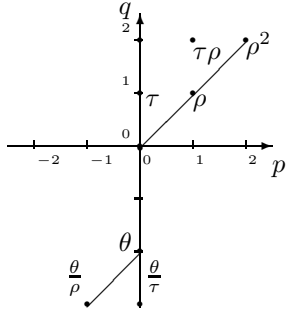

Every lattice point in the picture that is inside the indicated
cones represents a copy of the group $\Z/2$.  The top cone is a
polynomial algebra on the elements $\rho \in
H^{1,1}(pt;\underline{\Z/2})$ and $\tau \in
H^{0,1}(pt;\underline{\Z/2})$.  The element $\theta$ in the bottom
cone is infinitely divisible by both $\rho$ and $\tau$.  The
cohomology of $\Z/2$ is easier to describe:
$H^{*,*}(\Z/2;\underline{\Z/2})=\Z/2[t, t^{-1}]$ where $t \in
H^{0,1}(\Z/2;\underline{\Z/2})$.  Details can be found in
\cite{DuggerKR} and \cite{Caruso}.

Given a $G$-map $f\colon E \ra X$ and a Mackey functor $M$, we can
define a coefficient system $\cH^{q,r}(-,M)\colon  \Delta_G(X)^{op}
\ra \Ab$ by taking cohomology of pullbacks:
$\cH^{q,r}(f,M)(\sigma)=H^{q,r}(\sigma^*(E), M)$.  In \cite{MS}, it
is shown that this is a local coefficient system when $f$ is a
$G$-fibration.

Given a $G$-fibration $f\colon E \ra X$, we can define a functor
$\Gamma_f\colon \Delta_G(X) \ra \Top$.  On objects,
$\Gamma(\sigma)=\sigma^*(E)$. On morphisms, $\Gamma(\varphi,
\alpha)=\overline{\varphi \times \alpha}$, where $\overline{\varphi
\times \alpha}$ is the map of total spaces in the diagram

\[ \xymatrix{ \sigma^*(E)\ar[d] \ar[r]^{\overline{\varphi \times \alpha}} & \tau^*(E)\ar[d] \\
                G/H \times \Delta^n  \ar[r]^{\varphi \times \alpha} & G/K \times \Delta^m.}
\]

Let $f\colon E \ra X$ be a $G$-fibration over an equivariantly
1-connected $G$-space $X$ with base point $x \in X$ and let
$F=f^{-1}(x)$.  Define a constant coefficient system
$\underline{H}^{q,r}(F;M)$ as follows:
$\underline{H}^{q,r}(F;M)(G/H) = H^{q,r}((G/H) \times F;M)$ and if
$\varphi\colon  G/H \ra G/K$ is a $G$-map, then
$\underline{H}^{q,r}(F;M)(\varphi) = (\varphi \times id)^*$.  It is
this coefficient system that appears in the spectral sequence of
Theorem \ref{thm:localss}.

\section{Construction of the Spectral Sequence}
\label{sec:SS}

Unlike in ordinary topology, the equivariant Serre spectral sequence
for a fibration $f \colon E \ra X$ will not be deduced from lifting
a cellular filtration of $X$ to one on $E$.  Instead, the spectral
sequence is a special case of the one for a homotopy colimit. Recall
(from \cite{hocolim} for example) that given a cohomology theory
$\cE^*$ and a diagram of spaces $D \colon I \ra \Top_G$, there is a
natural spectral sequence as follows:
\begin{equation}
\label{eq:hocolimss} E_2^{p,q} = H^p(I^{op};\cE^q(D)) \implies
\cE^{p+q}(\hocolim D).
\end{equation}
For the case $I=\Delta_G(X)$, we know from \cite{MS} that the
cohomology of $\Delta_G(X)^{op}$ is the same as Bredon cohomology.
For a $G$-fibration $f \colon E \ra B$, we can consider the diagram
$\Gamma_f \colon \Delta_G(X) \ra \Top_G$ that sends $\sigma \colon
G/H \times \Delta^n \ra X$ to the pullback $\sigma^*(E)$.  We then
have the following technical lemma, whose proof is given in the next
section where it appears as Lemma \ref{lemma:hocowe}.
\begin{lemma*}
The composite $\hocolim_{\Delta_G(X)} \Gamma_f \ra
\colim_{\Delta_G(X)} \Gamma_f \ra E$ is a weak equivalence.
\end{lemma*}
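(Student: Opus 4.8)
The displayed composite is precisely the canonical augmentation $\varepsilon \colon \hocolim_{\Delta_G(X)}\Gamma_f \to E$ determined by the cocone of pullback maps $\sigma^*(E)\to E$, so the goal is to show $\varepsilon$ is a $G$-weak equivalence. The plan is to fiber the whole picture over $X$ and compare with the identity fibration. Composing $\varepsilon$ with $f$ gives a map $p\colon \hocolim_{\Delta_G(X)}\Gamma_f \to X$, which is also the map induced on homotopy colimits by the natural transformation $\Gamma_f \Rightarrow \Gamma_{\mathrm{id}}$ projecting each $\sigma^*(E)$ onto its base $G/H\times\Delta^n$. For the identity fibration $\Gamma_{\mathrm{id}}(\sigma)=G/H\times\Delta^n$, and the augmentation $\hocolim_{\Delta_G(X)}\Gamma_{\mathrm{id}}\to X$ is a $G$-weak equivalence; this is the realization statement underlying the identification of the cohomology of $\Delta_G(X)^{op}$ with Bredon cohomology used in \cite{MS}. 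Thus $\varepsilon$ becomes a map over $X$ whose analogue for the trivial fibration is already known to be an equivalence.

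The heart of the argument is computing the homotopy fibers of $p$. Since $f$ is a $G$-fibration, each $\Gamma_f(\sigma)=\sigma^*(E)$ is the pullback of $f$ over $G/H\times\Delta^n$; because $\Delta^n$ is $G$-fixed and contractible, equivariant fiber-homotopy invariance gives a $G$-homotopy equivalence between $\sigma^*(E)$ and its restriction over the last vertex, i.e.\ the pullback of $E$ along $\pi(\sigma)\colon G/H\to X$. Fixing $x\in X$ with stabilizer $H$, a Fubini/homotopy-cofinality argument then identifies the homotopy fiber of $p$ over $x$ with a homotopy colimit over the comma category of simplices meeting the orbit $Gx$; this category has contractible nerve, so the homotopy fiber collapses to a single copy of $f^{-1}(Gx)$. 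In other words $\varepsilon$ is a fiberwise $G$-weak equivalence over $X$.

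To globalize, I would run the comparison along a skeletal ($G$-CW or $\mathrm{Rep}(G)$-complex) filtration of $X$: the filtration of $X$ induces one of $\Delta_G(X)$ and hence of both $\hocolim_{\Delta_G(X)}\Gamma_f$ and $E$, the base case is a single orbit where $\Gamma_f$ is levelwise $\simeq G/H\times F$ over a contractible nerve, and each cell attachment is carried to a homotopy pushout by both constructions, so the gluing lemma for $G$-weak equivalences—checked on each fixed set $(-)^K$ via the equivariant Whitehead theorem—propagates the equivalence from skeleton to skeleton. \emph{The main obstacle} is the fiber/quasifibration control of the previous paragraph: one must verify that forming homotopy colimits over $\Delta_G(X)$ does not disturb the fibration structure over $X$, so that $p$ genuinely has $f^{-1}(Gx)$ as homotopy fiber. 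This is exactly the place where the hypothesis that $f$ is a $G$-fibration, rather than an arbitrary $G$-map, is used, through the contractibility of $\Delta^n$ together with a Dold-style comparison of (quasi)fibrations. A secondary, purely point-set obstacle is that $\colim_{\Delta_G(X)}\Gamma_f$ carries the final realization topology and is not homeomorphic to $E$—already for $\mathrm{id}\colon X\to X$ it is a fat realization mapping to $X$ by a weak equivalence rather than a homeomorphism—so the first map $\hocolim\to\colim$ must be shown to be a weak equivalence on its own, via the Reedy/projective cofibrancy of the diagram $\Gamma_f$.
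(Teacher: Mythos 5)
Your proposal correctly identifies the map in question as the canonical augmentation, but the step you yourself flag as ``the main obstacle'' is not a detail to be checked later --- it is the entire content of the lemma, and your sketch does not close it. Homotopy colimits do not commute with homotopy fibers in general, and the diagram $\Gamma_f$ is not fiberwise over $X$ in the sense required by a Puppe/Dold-type (quasi)fibration theorem: the preimage of a point $x$ under $\sigma^*(E)\to G/H\times\Delta^n \to X$ is $\sigma^{-1}(x)\times f^{-1}(x)$, not $f^{-1}(x)$, so the ``comma category of simplices meeting the orbit $Gx$'' does not index a diagram of copies of $f^{-1}(Gx)$, and the contractibility of its nerve is neither evident nor the statement you actually need. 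The same gap resurfaces in your globalization step: the lemma assumes only that $X$ is a $G$-space (no $G$-CW structure is given), and even when $X$ is a $G$-CW complex, a cell attachment $X=A\cup_S D$ does not induce a pushout decomposition of $\Delta_G(X)$, because most simplices of $X$ land in neither piece --- one would need a small-simplices/subdivision argument, which is again genuine work. Finally, your ``secondary obstacle'' is a misconception: neither $\hocolim\Gamma_f\to\colim\Gamma_f$ nor $\colim\Gamma_f\to E$ need be a weak equivalence separately (in general neither is), and nothing in the lemma requires this; only the composite is claimed to be one.

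The paper's proof sidesteps all fiberwise analysis by working over $E$ rather than over $X$. It replaces $\Gamma_f$ by the diagram $D$ of equivariant simplicial sets obtained by pulling back singular complexes, $G/H\times\Delta^n_s\times_{S(X)}S(E)$; the hypothesis that $f$ is a $G$-fibration is used exactly once, to see that $|D|\to\Gamma_f$ is an objectwise weak equivalence (strict pullbacks along a fibration are homotopy pullbacks, detected after applying the singular functor). Then the equivariant fiber-category criterion (the equivariant extensions, proved by passage to fixed sets, of Propositions 16.2, 16.3, and 16.9 of \cite{hocolim}) applies: for each $\sigma\colon G/H\times\Delta^n\to E$, the category $\tilde{F}(D)_\sigma$ has an initial object, namely the canonical lift of $\sigma$ to $(f\circ\sigma)^*(E)$, hence is contractible, and the criterion immediately yields that the composite $\hocolim D\to\colim D\to E$ is a weak equivalence. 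This route needs no CW structure on $X$, no quasifibration theory, and no identification of homotopy fibers --- precisely the ingredients your outline leaves unproved.
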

Here is the desired spectral sequence.
\begin{thm}
\label{thm:ss} If $f\colon E \ra X$ is a fibration of $G$ spaces,
then for every $V \in RO(G)$ and every Mackey Functor $M$ there is a
natural spectral sequence with
$E^{p,q}_2(M,V)=H^{p,0}(X;\cH^{V+q}(f;M)) \Rightarrow
H^{V+p+q}(E;M)$.
\end{thm}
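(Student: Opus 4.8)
The plan is to realize this spectral sequence as an instance of the general homotopy colimit spectral sequence \eqref{eq:hocolimss}, applied to the diagram $\Gamma_f \colon \Delta_G(X) \ra \Top_G$ and to the integer-graded cohomology theory obtained from $H^{*}(-;M)$ by fixing the representation grading. First I would fix $V \in RO(G)$ and $M$ and observe that setting $\cE^q(-) = H^{V+q}(-;M)$ defines a cohomology theory on $G$-spaces: the suspension and exactness axioms for $\cE^*$ are inherited from those of the $RO(G)$-graded theory $H^{*}(-;M)$, since translating the grading by the fixed virtual representation $V$ preserves all of the relevant structure. With this theory in hand, the spectral sequence \eqref{eq:hocolimss} with $I = \Delta_G(X)$ and $D = \Gamma_f$ reads
\[ E_2^{p,q} = H^p(\Delta_G(X)^{op}; \cE^q(\Gamma_f)) \implies \cE^{p+q}\bigl(\hocolim_{\Delta_G(X)} \Gamma_f\bigr). \]

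Next I would identify the three ingredients of this sequence with those in the statement. The coefficient system is immediate from the definitions: for an object $\sigma$ of $\Delta_G(X)$ one has $\cE^q(\Gamma_f)(\sigma) = H^{V+q}(\sigma^*(E);M) = \cH^{V+q}(f;M)(\sigma)$, so that $\cE^q(\Gamma_f) = \cH^{V+q}(f;M)$ as functors $\Delta_G(X)^{op} \ra \Ab$; moreover, because $f$ is a $G$-fibration this is a local coefficient system by \cite{MS}. For the $E_2$ term I would then invoke the result of \cite{MS} that the cohomology of $\Delta_G(X)^{op}$ agrees with Bredon cohomology, giving $H^p(\Delta_G(X)^{op}; \cH^{V+q}(f;M)) = H^{p,0}(X; \cH^{V+q}(f;M))$. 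Finally, for the abutment I would apply Lemma \ref{lemma:hocowe}: the natural map $\hocolim_{\Delta_G(X)} \Gamma_f \ra E$ is a weak equivalence, and since $\cE^*$ is homotopy invariant this yields $\cE^{p+q}(\hocolim_{\Delta_G(X)} \Gamma_f) \iso H^{V+p+q}(E;M)$. Substituting these three identifications into \eqref{eq:hocolimss} produces exactly the claimed $E_2$-page and abutment.

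The naturality of the spectral sequence follows because each step above is functorial in the fibration: a map of fibrations induces a map of diagrams $\Gamma_f$, hence compatible maps on homotopy colimits, on coefficient systems, and on the associated spectral sequences. The genuine content of the argument is thus concentrated in Lemma \ref{lemma:hocowe}, which I am permitted to assume here. The main obstacle lies in that lemma, to be established in the next section: one must check that the canonical comparison $\hocolim_{\Delta_G(X)} \Gamma_f \ra \colim_{\Delta_G(X)} \Gamma_f \ra E$ loses no homotopical information, i.e. that the pullbacks $\sigma^*(E)$ fit together coherently enough that the homotopy colimit recovers the weak equivariant homotopy type of $E$. Granting that, the only points in the present proof requiring care are the bookkeeping of the grading shift by $V$---ensuring $\cE^*$ is a bona fide cohomology theory---and the precise matching of $\cE^q(\Gamma_f)$ with $\cH^{V+q}(f;M)$, both of which are routine.
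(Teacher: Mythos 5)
Your proof is correct and follows exactly the paper's own argument: apply the homotopy colimit spectral sequence \eqref{eq:hocolimss} to $\Gamma_f$ with the shifted theory $H^{V+*}(-;M)$, identify the $E_2$-page with Bredon cohomology via \cite{MS}, and identify the abutment via Lemma \ref{lemma:hocowe}, with naturality inherited from the homotopy colimit spectral sequence. The only difference is that you spell out the grading-shift and coefficient-system identifications that the paper leaves implicit.
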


\begin{proof}
The homotopy colimit spectral sequence of (\ref{eq:hocolimss})
associated to $\Gamma_f$ and the cohomology theory $H^{V+*}(-;M)$
takes the form

\[E_2^{p,q}(M,V)=H^p(\Delta_G(X); \cH^{V+q}(\Gamma_f;M)) \Rightarrow H^{V+p+q}(\hocolim(\Gamma_f);M).\]

By \cite[Theorem 3.2]{MS} and Lemma \ref{lemma:hocowe}, this
spectral sequence becomes

\[E_2^{p,q}(M,V)=H^{p,0}(X; \cH^{V+q}(f;M)) \Rightarrow H^{V+p+q}(E;M).\]

Naturality of this spectral sequence follows from the naturality of
the homotopy colimit spectral sequence.

\end{proof}

The standard multiplicative structure on the spectral sequence is
given by the following theorem.  Recall that the analogue of tensor
product for Mackey functors is the box product, denoted by $\Box$.
See, for example, \cite{FL} for a full description of the box
product.

\begin{thm}
Given a $G$-fibration $f \colon E \ra X$, Mackey functors $M$ and
$M'$ and $V,V' \in RO(G)$, there is a natural pairing of the
spectral sequences of \ref{thm:ss}

$$E_r^{p,q}(M,V) \otimes E_r^{p',q'}(M';V') \ra E_r^{p+p',q+q'}(M\Box M'; V+V')$$

\noindent converging to the standard pairing

$$\cup \colon H^*(E;M) \otimes H^*(E;M') \ra H^*(E;M\Box M').$$

\noindent Furthermore, the pairing of $E_2$ terms agrees, up to a
sign $(-1)^{p'q}$, with the standard pairing
\begin{center}
$\xymatrix{H^{p,0}(X;\cH^{V+q}(f;M)) \otimes
H^{p',0}(X;\cH^{V'+q'}(f;M')) \ar[d]_\cup\\
H^{p+p',0}(X;\cH^{V+V'+p+p'+q+q'}(f;M\Box M'))}$
\end{center}
\end{thm}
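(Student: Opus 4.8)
The plan is to construct the pairing on the underlying filtered cochain complex and then appeal to the standard fact that a filtration-preserving pairing of filtered complexes induces a multiplicative pairing of the associated spectral sequences, converging to the induced product on the abutment. Recall that the spectral sequence of Theorem \ref{thm:ss} is the homotopy colimit spectral sequence (\ref{eq:hocolimss}) for the diagram $\Gamma_f$ and the theory $H^{V+*}(-;M)$; concretely, it is the spectral sequence of a filtered cochain complex computing $H^*(\Delta_G(X)^{op};\cH^{V+q}(f;M))$, filtered by the cochain (base) degree $p$. Since $\hocolim \Gamma_f \ra E$ is a weak equivalence by Lemma \ref{lemma:hocowe}, the abutment carries the genuine cup product, so it suffices to realize $\cup$ through a filtration-preserving pairing of these cochain complexes.

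First I would construct the pairing in the fiber (coefficient) direction. For each object $\sigma$ of $\Delta_G(X)$, the external cup product in $RO(G)$-graded cohomology together with the canonical box product $M \otimes M' \ra M \Box M'$ of Mackey functors (see \cite{FL}) supplies a natural map
\[\cH^{V+q}(f;M)(\sigma) \otimes \cH^{V'+q'}(f;M')(\sigma) \ra \cH^{V+V'+q+q'}(f;M\Box M')(\sigma),\]
induced by the cup product on $H^*(\sigma^*E;-)$. I would then verify that this is compatible with the structure maps $\Gamma_f(\varphi,\alpha)$, so that it assembles into a pairing of coefficient systems over $\Delta_G(X)^{op}$.

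Next I would combine this coefficient pairing with the Alexander--Whitney cup product of the cochains computing the category cohomology, which provides the multiplication in the base direction, sending a $p$-cochain and a $p'$-cochain to a $(p+p')$-cochain. Interchanging the base degree $p'$ of the second factor past the fiber degree $q$ of the first factor introduces the Koszul sign $(-1)^{p'q}$ recorded in the statement. Because the base multiplication is additive in the cochain degree, the total pairing preserves the filtration and hence descends to a pairing $E_r^{p,q}(M,V)\otimes E_r^{p',q'}(M';V') \ra E_r^{p+p',q+q'}(M\Box M';V+V')$ commuting with the differential on every page. Passing to $E_2$ and invoking the identification of \cite[Theorem 3.2]{MS} of the category cohomology with Bredon cohomology, this recovers, up to the sign $(-1)^{p'q}$, the cup product pairing on the groups $H^{p,0}(X;\cH^{V+q}(f;M))$ displayed in the statement.

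The main obstacle is twofold. On the homotopical side, I must verify that the weak equivalence $\hocolim \Gamma_f \ra E$ of Lemma \ref{lemma:hocowe} is multiplicative, i.e.\ compatible with the diagonals, so that the cochain pairing genuinely converges to $\cup$ on $H^*(E;-)$ rather than to some a priori different product; this amounts to an equivariant Eilenberg--Zilber comparison between the diagonal on the realization of $\Gamma_f$ and the Alexander--Whitney diagonal on the nerve. On the algebraic side, the delicate equivariant ingredient is the interaction of the box product of Mackey functors with the $RO(G)$-graded external product: one must confirm that the fiberwise pairing is natural in $\Delta_G(X)$ and associative and unital up to the expected signs, so that the induced map of coefficient systems is well defined. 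Once these points are settled, naturality in $f$, $M$, $M'$, $V$, and $V'$ is formal, and convergence to the cup product on the abutment follows from the standard comparison of the abutment product with the filtered product.
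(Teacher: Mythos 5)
Your plan has a foundational gap: it presumes that the spectral sequence of Theorem \ref{thm:ss} is the spectral sequence of a filtered \emph{cochain complex}, on which one can then perform Alexander--Whitney-type constructions. For $RO(G)$-graded cohomology with Mackey functor coefficients this starting point is not available. The theory $H^{V+*}(-;M)$ is a represented theory (via equivariant Eilenberg--Mac Lane spectra, as in \cite{LMM}), and for a general $V \in RO(G)$ there is no functorial cochain complex computing it, let alone one carrying natural associative products; this is exactly why the paper constructs the spectral sequence from the homotopy colimit spectral sequence (\ref{eq:hocolimss}) --- an exact couple arising from the skeletal filtration of $\hocolim \Gamma_f$ --- rather than from any total complex. (Your own description wavers on this point: a filtered complex ``computing $H^*(\Delta_G(X)^{op};\cH^{V+q}(f;M))$'' would converge to category cohomology, which is the $E_2$ page, not to the abutment $H^{V+p+q}(E;M)$.) Consequently the two ``obstacles'' you defer --- multiplicativity of the comparison $\hocolim\Gamma_f \ra E$ and compatibility of the box product with external products --- are not the heart of the matter; the real issue is that the cochain-level apparatus in which you propose to build the pairing does not exist in this setting. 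The pairing must be produced at the level of the filtered space/spectrum: a filtration-compatible (cellular) approximation of the diagonal of $\hocolim \Gamma_f$, paired with a map of representing spectra realizing $M \otimes M' \ra M \Box M'$, inducing a pairing of the exact couples.

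That filtered-space argument is precisely what the paper invokes: its entire proof is that the statement ``is a straightforward application of \cite[Theorem 4.1]{MS},'' i.e.\ the multiplicativity theorem for the spectral sequence of \cite{MS}, applied with the theories $H^{V+*}(-;M)$, $H^{V'+*}(-;M')$, and $H^{V+V'+*}(-;M\Box M')$ and the pairing induced by the box product. Your fiberwise pairing of coefficient systems $\cH^{V+q}(f;M) \otimes \cH^{V'+q'}(f;M') \ra \cH^{V+V'+q+q'}(f;M\Box M')$ and the Koszul sign $(-1)^{p'q}$ are correct, and they are indeed the inputs and outputs of that theorem; but to make your argument self-contained you would need to reprove multiplicativity of the homotopy colimit spectral sequence for represented theories by the filtered-space method, not assemble it from cochains.
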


\begin{proof}
This is a straightforward application of \cite[Theorem 4.1]{MS}.

\end{proof}

\remark If $M$ is a ring Mackey functor, then the product $M \Box M
\ra M$ gives a pairing of spectral sequences
$$E_r^{p,q}(M,V) \otimes E_r^{p',q'}(M,V') \ra E_r^{p+p',q+q'}(M, V+V').$$

\remark Since every $G$-fibration $f \colon E \ra X$ maps to the
$G$-fibration $id\colon X \ra X$, every spectral sequence of Theorem
\ref{thm:ss} admits a map from the spectral sequence for the
identity of $X$.

\begin{lemma}
\label{lemma:local} If $f\colon E \ra X$ is a $G$-fibration over an
equivariantly 1-connected based $G$-space X, then any local
coefficient system $\cA$ on $X$ is constant.
\end{lemma}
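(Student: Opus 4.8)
The plan is to prove that the forgetful functor $\pi_2\colon \Pi_G(X)\to\cO(G)$ is an equivalence of categories whenever $X$ is equivariantly 1-connected; the lemma then follows by pure formalism. Indeed, by definition a local coefficient system $\cA\colon\Delta_G(X)^{op}\to\Ab$ factors up to isomorphism as $\cA\iso\bar{\cA}\circ\pi_1^{op}$ for some $\bar{\cA}\colon\Pi_G(X)^{op}\to\Ab$, where $\pi_1\colon\Delta_G(X)\to\Pi_G(X)$ is the first forgetful functor. If $\pi_2$ admits a quasi-inverse $s\colon\cO(G)\to\Pi_G(X)$, then $\hat{\cA}:=\bar{\cA}\circ s^{op}\colon\cO(G)^{op}\to\Ab$ satisfies $\hat{\cA}\circ\pi_2^{op}\iso\bar{\cA}$ (using $s\circ\pi_2\iso\Id$), so that $\cA\iso\hat{\cA}\circ(\pi_2\circ\pi_1)^{op}$ factors through $\cO(G)^{op}$; this is exactly the statement that $\cA$ is constant.

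To establish the equivalence I would first record the standard dictionary. An object of $\Pi_G(X)$ over $G/H$ is a $G$-map $\sigma\colon G/H\to X$, equivalently a point $\sigma(eH)\in X^H$; and a $G$-homotopy class of paths between two such maps $G/H\to X$ is precisely a homotopy class rel endpoints of a path in the fixed space $X^H\iso\Map_G(G/H,X)$. Since $X$ is equivariantly 1-connected---that is, each fixed set $X^H$ is 1-connected---every $X^H$ is in particular nonempty, so each object $G/H$ of $\cO(G)$ is realized and $\pi_2$ is essentially surjective.

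For full faithfulness, fix objects $\sigma\colon G/H\to X$ and $\tau\colon G/K\to X$ together with a $G$-map $\varphi\colon G/H\to G/K$. The fiber of $\pi_2$ over $\varphi$ is the set of $G$-homotopy classes of paths from $\sigma$ to $\tau\circ\varphi$, which by the dictionary is the set of homotopy classes rel endpoints of paths in $X^H$ from $\sigma(eH)$ to $(\tau\varphi)(eH)$. Path-connectedness of $X^H$ makes this set nonempty, and since $X^H$ is simply connected it is a singleton; hence $\pi_2$ induces a bijection $\Hom_{\Pi_G(X)}(\sigma,\tau)\to\Hom_{\cO(G)}(G/H,G/K)$. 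Thus $\pi_2$ is fully faithful, and combined with essential surjectivity it is an equivalence, completing the argument.

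The one step requiring genuine care---the main obstacle---is the dictionary underlying full faithfulness: one must verify that a $G$-homotopy class of equivariant paths corresponds exactly to a rel-endpoint homotopy class of a path in $X^H$, so that the two halves of ``equivariantly 1-connected'' do their separate jobs, connectedness of each $X^H$ giving existence of such a path and simple connectivity giving its uniqueness up to homotopy. Once this correspondence is pinned down the remainder is routine category theory, and it is precisely why the hypothesis cannot be weakened to mere equivariant connectedness.
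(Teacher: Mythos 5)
Your proof is correct, but it takes a more structural route than the paper. The paper's argument is basepoint-driven and completely explicit: it defines a constant system directly on the orbit category by $\bar{\cA}(G/H)=\cA(x_H)$ and $\bar{\cA}(\varphi)=\cA(\varphi,c_x)$, then uses the unique homotopy classes of paths $\beta_\sigma$ from each object $\sigma$ to the basepoint $x_H$ to write down a natural isomorphism $\bar{\cA}\circ\pi \iso \cA$. You instead prove the stronger, reusable statement that the forgetful functor $\Pi_G(X)\ra\cO(G)$ is an equivalence of categories and deduce the factorization by formal nonsense. The two arguments run on exactly the same engine: equivariant 1-connectedness gives, over each $\varphi\colon G/H\ra G/K$, exactly one morphism $\sigma\ra\tau$ in $\Pi_G(X)$, because a $G$-homotopy class of paths of $G$-maps $G/H\ra X$ is the same as a rel-endpoint homotopy class of paths in $X^H\iso\Map_G(G/H,X)$ (your ``dictionary,'' which you rightly flag as the one step needing care; the paper uses the same fact both to define $\beta_\sigma$ and, implicitly, to see that its square commutes). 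Indeed, the paper's $\bar{\cA}$ is precisely your $\bar{\cA}\circ s^{op}$ for the canonical quasi-inverse $s(G/H)=x_H$, $s(\varphi)=(\varphi,c_x)$, so the paper's proof is your proof with a specific choice of $s$ made visible. What your version buys is a clean categorical statement ($\Pi_G(X)\simeq\cO(G)$) and painless handling of the ``up to isomorphism'' clauses in the definitions; what the paper's version buys is an explicit formula for the constant system as the value of $\cA$ at the basepoint, which is exactly what the proof of Theorem \ref{thm:localss} consumes when it identifies $\bar{\cA}(G/H)$ with $H^{V+q}(x_H^*E)=\underline{H}^{V+q}(F;M)(G/H)$. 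If you intend your lemma to feed that later argument, you should record the explicit quasi-inverse (equivalently, that $\hat{\cA}(G/H)\iso\cA(x_H)$) rather than only its existence.
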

\begin{proof}

Choose a base point $x \in X$.  Then $x$ can be considered as a map
$x\colon  G/G \ra X$.  Denote by $x_H$ the point $x$ thought of as a
$G/H$ point.  That is $x_H=x \circ \pi$ where $\pi\colon G/H \ra
G/G$ is the projection. Notice that if $\varphi\colon  G/H \ra G/K$,
then $x_K = x_H \circ \varphi$.

Define a constant coefficient system $\bar{\cA} \colon \cO(G) \ra
\Ab$ by $\bar{\cA}(G/H)=\cA(x_H)$ and $\bar{\cA}(\varphi \colon  G/H
\ra G/K)= \cA(\varphi, c_x)$, where $c_x$ is the constant path from
$x_H$ to $x_K$.  The claim is that $\cA$ factors through $\bar{\cA}$
up to isomorphism.

For any object $\sigma\colon  G/H \ra X$, the connectivity
assumptions ensure that there is one homotopy class of paths from
$\sigma$ to $x_H$.  Let $\beta_\sigma$ be a representative path.

For any morphism $(\varphi, \alpha)$ in $\Pi_G(X)$ from $\sigma$ to
$\tau$, one then has the following commutative diagram:

\[ \xymatrix{\cA(\tau)  \ar[r]^{\cA(\varphi,\alpha)} & \cA(\sigma) \\
                  \cA(x_K) \ar[u]^{\cA(id,\beta_\tau)} \ar[r]^{\cA(\varphi,c_x)} & \cA(x_H) \ar[u]_{\cA(id, \beta_\sigma)} .}
\]

Now, each of the vertical maps is an isomorphism since, for example,
the path $\beta_\sigma$ has the inverse path
$\overline{\beta_\sigma}$ and each of the compositions $\beta_\sigma
* \overline{\beta_\sigma}$ and $\overline{\beta_\sigma} *
\beta_\sigma$ are homotopic to constant paths.  The same is true for
$\tau$.

Moreover, $(\bar{\cA} \circ \pi)(\sigma) = \bar{\cA}(G/H) =
\cA(x_H)$, and $(\bar{\cA} \circ \pi)(\varphi,
\alpha)=\bar{\cA}(\varphi) = \cA(\varphi, c_x)$.

This means that the above diagram exhibits an isomorphism from
$\bar{\cA} \circ \pi \ra \cA$.

\end{proof}

\begin{thm}
\label{thm:localss} If $X$ is equivariantly 1-connected and $f
\colon E \ra X$ is a fibration of $G$ spaces with fiber $F$, then
for every $V \in RO(G)$ and every Mackey Functor $M$ there is a
spectral sequence with
$E^{p,q}_2=H^{p,0}(X;\underline{H}^{V+q}(F;M)) \Rightarrow
H^{V+p+q}(E;M)$.
\end{thm}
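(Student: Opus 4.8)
The plan is to specialize the spectral sequence of Theorem \ref{thm:ss} and then identify the local coefficient system appearing there with the constant system $\underline{H}^{V+q}(F;M)$. The starting point is
\[E^{p,q}_2 = H^{p,0}(X;\cH^{V+q}(f;M)) \Rightarrow H^{V+p+q}(E;M),\]
which holds for an arbitrary $G$-fibration by Theorem \ref{thm:ss}, with no connectivity hypothesis on $X$. Since $f$ is a $G$-fibration, the coefficient system $\cH^{V+q}(f;M)$ is a genuine local coefficient system, i.e.\ it factors through $\Pi_G(X)^{op}$, by the result of \cite{MS} recalled in Section \ref{sec:prelims}.

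Next I would bring in the hypothesis that $X$ is equivariantly 1-connected. By Lemma \ref{lemma:local}, every local coefficient system on such an $X$ is isomorphic to a constant one; applying this to $\cA = \cH^{V+q}(f;M)$ produces a constant coefficient system $\bar{\cA}\colon \cO(G) \ra \Ab$ together with an isomorphism $\bar{\cA} \circ \pi \iso \cA$. Feeding this isomorphism into the $E_2$-page leaves the spectral sequence unchanged up to isomorphism, so it now reads $E^{p,q}_2 = H^{p,0}(X; \bar{\cA})$.

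The remaining step, which carries the actual content, is to identify $\bar{\cA}$ with $\underline{H}^{V+q}(F;M)$ as constant coefficient systems. Unwinding the construction in Lemma \ref{lemma:local}, one has $\bar{\cA}(G/H) = \cA(x_H) = H^{V+q}(x_H^*(E);M)$, where $x_H\colon G/H \ra X$ is the chosen base point regarded as a $G/H$-point. Because $x_H$ factors as $G/H \ra G/G \llra{x} X$, it is the constant map at $x$, so the pullback $x_H^*(E)$ is canonically $(G/H) \times f^{-1}(x) = (G/H) \times F$. Hence $\bar{\cA}(G/H) = H^{V+q}((G/H)\times F;M)$, which is exactly the value $\underline{H}^{V+q}(F;M)(G/H)$ recorded in Section \ref{sec:prelims}. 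It remains to match the structure maps: for $\varphi\colon G/H \ra G/K$ the map $\bar{\cA}(\varphi) = \cA(\varphi, c_x)$ is induced by $\varphi$ on the pullbacks over the constant path $c_x$, and this is precisely the map $(\varphi \times id)^*$ defining $\underline{H}^{V+q}(F;M)$.

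The main obstacle, modest as it is, lies in this last identification: one must check that the isomorphism supplied by Lemma \ref{lemma:local} is compatible with the morphisms $(\varphi, c_x)$ of $\Pi_G(X)$ and that the constant path $c_x$ contributes the identity, so that these morphisms correspond exactly to the maps $(\varphi \times id)^*$. Once the constant paths are seen to induce the evident maps of pullbacks, everything is forced and the spectral sequence of Theorem \ref{thm:ss} becomes the asserted one.
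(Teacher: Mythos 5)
Your proposal is correct and follows essentially the same route as the paper: invoke Theorem \ref{thm:ss}, apply Lemma \ref{lemma:local} to replace the local system $\cH^{V+q}(f;M)$ by its associated constant system $\bar{\cA}$, and identify $\bar{\cA}$ with $\underline{H}^{V+q}(F;M)$ via the computation $x_H^*E = \pi^*x^*E = (G/H)\times F$. In fact you are slightly more careful than the paper, which checks only the values on objects and leaves the compatibility of the structure maps $(\varphi, c_x) \leftrightarrow (\varphi \times id)^*$ implicit.
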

\begin{proof} By Theorem \ref{thm:ss} and the above Lemma \ref{lemma:local}, it suffices to show that for the local coefficient system $\cA = \cH^{V+q}(f;M)$, the associated constant coefficient $\bar{\cA}$ is $\underline{H}^{V+q}(F;M)$.

Notice that $x_H = x \circ \pi$ and so $x_H^*E = (x \circ \pi)^*E =
\pi^* x^*E= \pi^*F = (G/H) \times F$.

We then have $\bar{\cA}(G/H) = \cA(x_H)=H^{V+q}(x_H^*E) =
H^{V+q}((G/H) \times F) = \underline{H}^{V+q}(F;M)(G/H)$.

\end{proof}


\section{Homotopical Considerations}
\label{sec:homotopical}

What follows is an equivariant version of some of the statements
about homotopical decompositions in \cite{hocolim}.  These are
needed for the proof of Lemma \ref{lemma:hocowe}, and may also be of
independent interest.  Because of the technical nature of this
material, the uninterested reader may skip ahead to the next
section.

Consider the category $G$-$\sSet$ of equivariant simplicial sets.
 The objects are simplicial sets endowed with a $G$-action and all of
the face and degeneracy maps respect the action.  The morphisms are
equivariant versions of the usual simplicial maps.  An $n$-simplex
of an equivariant simplicial set $X$ is an element $\sigma \in X_n$.
Alternatively, we can think of an equivariant $n$-simplex as an
equivariant simplicial map $\sigma \colon G/H \times \Delta^n \ra
X$.  Both points of view can be useful.

$G$-$sSets$ has a model category structure in which fibrations
andweak equivalences are defined in terms of the fixed sets, that is
$f$ is a fibration if for all subgroups $H$ the simplicial map $f^H$
is a fibration, and similarly for weak equivalences.  The
cofibrations are then the maps with the appropriate lifting
properties.

Let $D \colon I \ra G$-$\sSet$ be a diagram of equivariant
simplicial sets.  Suppose there is a map $\colim_I D \ra X$.  For
each simplex $\sigma \in X$ let $F(D)_\sigma$ denote the category
whose objects are pairs $[i, \alpha \in (D_i)_n]$ such that the map
$D_i \ra X$ sends $\alpha$ to $\sigma$.  A map in $F(D)_\sigma$ from
$[i, \alpha \in (D_i)_n]$ to $[j, \beta \in (D_j)_n]$ is a map $i
\ra j$ such that $D_i \ra D_j$ sends $\alpha$ to $\beta$.  Then
$F(D)_\sigma$ is called the fiber category of $D$ over $\sigma$.

\begin{prop}  Suppose that $D \colon I \ra G$-$\sSet$ and $X$ are as above, and assume that for every $n \geq 0$ and every $\sigma \in X_n$ the fiber category $F(D)_\sigma$ is contractible.  Then the map $\hocolim D \ra X$ is a weak equivalence of equivariant simplicial sets.
\end{prop}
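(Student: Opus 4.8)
The plan is to reduce the statement to a purely non-equivariant one by passing to fixed points, and then to prove that non-equivariant statement by the standard realization (Bousfield--Kan) argument, comparing $\hocolim D$ with $X$ one simplicial degree at a time.

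First I would use that, by the definition of the model structure on $G\text{-}\sSet$ recorded above, a map is a weak equivalence exactly when it induces a weak equivalence on $H$-fixed simplicial sets $(-)^H$ for every subgroup $H$. So it suffices to analyze $(\hocolim D)^H \ra X^H$. Writing $\hocolim D$ as the diagonal of the simplicial replacement $[p]\mapsto \coprod_{i_0 \ra \cdots \ra i_p} D(i_0)$ and using that $(-)^H$ is computed degreewise and commutes both with the disjoint unions (indexed by the fixed set of chains in $I$) and with the diagonal, I obtain a natural isomorphism $(\hocolim_I D)^H \cong \hocolim_I (D^H)$, under which the augmentation becomes the canonical map $\hocolim_I(D^H) \ra X^H$. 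Thus the proposition reduces to the corresponding non-equivariant statement for each diagram $D^H \colon I \ra \sSet$ and target $X^H$.

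Next I would prove that non-equivariant statement. Form the bisimplicial set $Y$ with $Y_{p,q}=\coprod_{i_0 \ra \cdots \ra i_p}(D(i_0))_q$, so that $\hocolim D = \diag Y$, and compare it via the augmentation with the bisimplicial set that is constant equal to $X_q$ in the $p$-direction (whose diagonal is $X$). By the realization lemma of \cite{hocolim} it is enough to check, for each fixed $q$, that the map of simplicial sets in the $p$-variable $Y_{\bullet,q}\ra X_q$ is a weak equivalence. Here $X_q$ is a discrete simplicial set, and $Y_{\bullet,q}$ splits as a disjoint union over $\sigma\in X_q$ of its summand lying over $\sigma$. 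The key identification is that this summand is exactly the nerve of the fiber category $F(D)_\sigma$: a $p$-simplex over $\sigma$ is a chain $i_0 \ra \cdots \ra i_p$ together with a simplex $\alpha\in(D(i_0))_q$ mapping to $\sigma$, which is the same datum as a $p$-chain of morphisms in $F(D)_\sigma$. The contractibility hypothesis makes each summand weakly contractible, so $Y_{\bullet,q}\ra X_q$ is a weak equivalence and the realization lemma finishes the non-equivariant case.

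Finally I would assemble the two steps, and this is where the main obstacle lies. Running the realization argument on $H$-fixed points and using that the nerve commutes with fixed points ($N(\cC)^H=N(\cC^H)$) together with the fact that fixed points distribute over the coproduct of components, the summand of $(Y_{\bullet,q})^H$ over an $H$-fixed simplex $\sigma$ is $N\big((F(D)_\sigma)^H\big)=N\big(F(D^H)_\sigma\big)$. So the proof goes through provided each of these fixed-point fiber categories is contractible. The difficulty is precisely that contractibility of $F(D)_\sigma$ does not by itself force contractibility of $(F(D)_\sigma)^H$: the hypothesis must be used at the level of fixed points, i.e. one needs that for every $H$ and every $H$-fixed simplex $\sigma$ the fiber category $F(D^H)_\sigma$ of the fixed-point diagram is contractible. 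I would discharge this either by reading the contractibility hypothesis equivariantly, or, in the intended application to $\Gamma_f$, by exhibiting a canonical (hence $H$-fixed, hence fixed-point-compatible) terminal object in each fiber category, which yields contractibility uniformly on every fixed-point subcategory. Granting this, the realization lemma applies on each $(-)^H$ and delivers the weak equivalence $\hocolim D \ra X$.
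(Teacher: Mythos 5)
Your argument is correct, and its core is the same as the paper's: the paper disposes of this proposition in two sentences, saying the proof is ``nearly identical'' to the non-equivariant Proposition 16.9 of \cite{hocolim}, the key facts being that realization of bisimplicial sets commutes with fixed points and that equivariant weak equivalences are detected on fixed sets. Your first two steps --- reducing to $(-)^H$ via the stated model structure, then running the simplicial-replacement/realization-lemma argument and identifying the summand of $Y_{\bullet,q}$ over $\sigma$ with the nerve of the fiber category --- are exactly a written-out version of the argument the paper cites.

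The ``main obstacle'' you isolate in your last paragraph is genuine, and you resolve it the right way; in fact it is a defect of the statement as printed rather than of your proof. Read literally ($\sigma$ an element of $X_n$, and $F(D)_\sigma$ contractible merely as a category), the hypothesis is too weak and the proposition is false: take $G=\Z/2$, $I=\Delta^{op}$, let $D([m])$ be the discrete $\Z/2$-simplicial set on $(E\Z/2)_m=(\Z/2)^{m+1}$ with the free diagonal action, and let $X=\Delta^0$. Every fiber category is then (the opposite of) the category of simplices of $E\Z/2$, whose nerve is contractible, yet $(\hocolim D)^{\Z/2}=\hocolim\,(D^{\Z/2})=\emptyset$ while $X^{\Z/2}=\Delta^0$, so the augmentation is not an equivariant weak equivalence. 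What the fixed-point reduction actually needs --- and what you supply --- is contractibility of $F(D^H)_\sigma=(F(D)_\sigma)^H$ for every $H\leq G$ and every $H$-fixed $\sigma$. This is precisely how the paper itself phrases the hypothesis in the two subsequent topological propositions, where $\sigma$ and $\alpha$ range over equivariant simplices $G/H\times\Delta^n\ra -$, and it is what gets verified in the application, Lemma \ref{lemma:hocowe}, via a canonical (hence fixed) \emph{initial} object --- you say terminal, the paper says initial, and either one does the job, since a fixed initial or terminal object survives into every fixed subcategory and makes it contractible. So, with the hypothesis read equivariantly as you propose, your proof is complete and agrees with the paper's intent.
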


\begin{proof}
The proof is nearly identical to that of Proposition 16.9 in
\cite{hocolim}.  The key facts are that for a bisimplicial set $B$,
the geometric realization satisfies $|B|^H=|B^H|$ and that weak
equivalences are determined by their fixed sets.
\end{proof}

Now, suppose that $D \colon I \ra \Top_G$ is a diagram of
$G$-spaces.  Suppose we have a map $p \colon \colim D \ra X$.  Then
for each $n \geq 0$, each subgroup $H \leq G$ and each $\sigma
\colon G/H \times \Delta^n \ra X$ define the fiber category
$F(D)_\sigma$ of $D$ over $\sigma$ to be the category with objects
pairs $[i, \alpha \colon G/H \times \Delta^n \ra D_i]$ such that $p
\circ \alpha = \sigma$.  A map from $[i, \alpha \colon G/H \times
\Delta^n \ra D_i]$ to $[j, \beta \colon G/H \times \Delta^n \ra
D_j]$ is a map $i \ra j$ making the obvious diagram commute.

\begin{prop}  In the above setting, suppose that for each $n\geq 0$, $H \leq G$, and $\sigma \colon G/H \times \Delta^n \ra X$ the category $F(D)_\sigma$ is contractible.  Then the composite $\hocolim D \ra \colim D \ra X$ is a weak equivalence.
\end{prop}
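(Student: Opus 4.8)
The plan is to reduce this topological statement to the simplicial Proposition just proved, by applying the singular complex functor $\operatorname{Sing}$ levelwise to the diagram $D$ and then transporting the conclusion back to spaces via geometric realization. This mirrors the reduction in \cite{hocolim} and keeps the genuinely combinatorial work inside the simplicial case already handled.

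First I would form the diagram $\operatorname{Sing}\circ D \colon I \to G\text{-}\sSet$ of equivariant simplicial sets, recording that $\operatorname{Sing}$ is equivariant with $(\operatorname{Sing} Y)^H = \operatorname{Sing}(Y^H)$. The map $p\colon \colim_I D \to X$ induces the canonical comparison $\colim_I(\operatorname{Sing} D) \to \operatorname{Sing}(\colim_I D) \xrightarrow{\operatorname{Sing} p} \operatorname{Sing} X$, so the preceding Proposition may be applied to the diagram $\operatorname{Sing} D$ over $\operatorname{Sing} X$ as soon as its contractibility hypothesis is verified.

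The conceptual heart of the reduction is that the two notions of fiber category agree under the adjunction $|{-}| \dashv \operatorname{Sing}$. An equivariant $n$-simplex $\sigma\colon G/H \times \Delta^n \to \operatorname{Sing} X$ is adjoint to a map $\tilde\sigma\colon G/H \times \Delta^n \to X$ of $G$-spaces, using $|G/H \times \Delta^n| \cong G/H \times \Delta^n$; likewise an object $[i,\alpha\colon G/H \times \Delta^n \to \operatorname{Sing} D_i]$ of $F(\operatorname{Sing} D)_\sigma$ is adjoint to a map $\tilde\alpha\colon G/H \times \Delta^n \to D_i$ of $G$-spaces with $p\circ\tilde\alpha = \tilde\sigma$. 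This assignment is an isomorphism of categories $F(\operatorname{Sing} D)_\sigma \cong F(D)_{\tilde\sigma}$, and as $\sigma$ ranges over all equivariant simplices of $\operatorname{Sing} X$ the adjunct $\tilde\sigma$ ranges over exactly the maps appearing in the hypothesis. Hence every $F(\operatorname{Sing} D)_\sigma$ is contractible, and the preceding Proposition yields that $\hocolim_I(\operatorname{Sing} D) \to \operatorname{Sing} X$ is a weak equivalence of equivariant simplicial sets.

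Finally I would transfer this back to $\Top_G$. Since $|{-}|$ is a left adjoint preserving finite products, it commutes with the bar-construction homotopy colimit, giving a natural isomorphism $|\hocolim_I \operatorname{Sing} D| \cong \hocolim_I |\operatorname{Sing} D|$; and the counit $|\operatorname{Sing} D_i| \to D_i$ is an equivariant weak equivalence for each $i$, since on $H$-fixed sets it is the ordinary counit, so $\hocolim_I|\operatorname{Sing} D| \to \hocolim_I D$ is a weak equivalence. Realizing the simplicial weak equivalence of the previous step and using that $|\operatorname{Sing} X| \to X$ is a weak equivalence then produces a commuting triangle relating $|\hocolim_I \operatorname{Sing} D| \to \hocolim_I D \to X$ and $|\hocolim_I \operatorname{Sing} D| \to X$ in which two of the three edges are weak equivalences; by two-out-of-three the map $\hocolim_I D \to X$ is a weak equivalence. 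The main obstacle I anticipate is the homotopical bookkeeping of this last paragraph — checking that realization genuinely commutes with the equivariant homotopy colimit (leaning on $|B|^H = |B^H|$ exactly as in the previous proof) and that the comparison triangle commutes by naturality of the counit and compatibility with $p$ — whereas the fiber-category identification is essentially formal.
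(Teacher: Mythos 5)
Your proof is correct, but it takes a genuinely different route from the paper's. The paper reduces the equivariant topological statement to the \emph{non-equivariant topological} one: an equivariant simplex $\sigma \colon G/H \times \Delta^n \ra X$ is the same datum as a map $\bar{\sigma} \colon \Delta^n \ra X^H$, the fiber categories $F(D)_\sigma$ and $F(D^H)_{\bar{\sigma}}$ match under this correspondence, homotopy colimits commute with fixed points (via $|B|^H = |B^H|$), and then Theorem 16.2 of \cite{hocolim} applied to each fixed-point diagram $D^H \ra X^H$ gives a weak equivalence on all fixed sets, which is precisely an equivariant weak equivalence. You instead reduce to the \emph{equivariant simplicial} proposition proved just before, crossing the $\Top_G$ versus $G$-$\sSet$ divide with $\operatorname{Sing}$ and $|{-}|$ rather than the equivariant versus non-equivariant divide with $(-)^H$. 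Both reductions are sound: your fiber-category identification under the adjunction is the correct formal heart, and the transfer back to spaces (realization commutes with the bar-construction hocolim, the counit maps $|\operatorname{Sing} D_i| \ra D_i$ are objectwise equivariant weak equivalences, two-out-of-three) goes through, since the simplicial replacement has degeneracies that are coproduct-summand inclusions and all the relevant constructions commute with fixed points. What each approach buys: the paper's reduction is a three-line argument that imports the non-equivariant literature wholesale, using the same fixed-point mechanism that runs through the whole section; yours is more self-contained relative to the paper (it leans only on the preceding proposition), at the cost of the homotopical bookkeeping you flag, and it is essentially the equivariant analogue of how \cite{hocolim} itself derives the topological decomposition theorem from the simplicial one --- close in spirit to the paper's Proposition \ref{prop:simphoco} for diagrams equipped with simplicial models. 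One caveat worth recording: your appeal to the preceding proposition reads its hypothesis as quantifying over equivariant simplices $G/H \times \Delta^n \ra X$ for every subgroup $H$ (the reading its fixed-point proof actually requires), rather than literally over elements $\sigma \in X_n$; since your adjunction argument verifies contractibility for all $H$, you supply this stronger hypothesis and nothing breaks.
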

\begin{proof}
A map $\sigma \colon G/H \times \Delta^n \ra X$ is equivalent to a
map $\bar{\sigma} \colon \Delta^n \ra X^H$.  Thus we can reduce to
looking at the fixed sets.  But, this is exactly Theorem 16.2 in
\cite{hocolim}.  The condition that $F(D)_\sigma$ is contractible is
equivalent to the condition that $F(D)_{\bar{\sigma}}$ is
contractble.  Thus the composite is a weak equivalence on fixed
sets, and so is an equivariant weak equivalence.
\end{proof}

There is a related simplicial version of the above theorem.  Assume
that in addition there is a diagram $\tilde{D} \colon I \ra
G$-$\sSet$ and a natural isomorphism $\phi_i \colon |\tilde{D_i}|
\ra D_i$.  For each $\sigma \colon G/H \times \Delta^n \ra X$ define
the category $\tilde{F}(D)_\sigma$ to have objects pairs $[i, G/H
\times \Delta^n_s \ra \tilde{D}_i]$ such that the composite $|G/H
\times \Delta^n_s| \ra |\tilde{D}_i| \ra D_i \ra X$ is $\sigma$. The
morphisms are as expected.  Here, $\Delta^n_s \in \sSet$ is the
$n$-simplex.  The following is a refinement of the previous theorem.

\begin{prop}
\label{prop:simphoco} In the above setting, suppose that for each $n
\geq 0$, $H \leq G$, and $\sigma \colon G/H \times \Delta^n \ra X$
the category $\tilde{F}(D)_\sigma$ is contractible.  Then the
composite $\hocolim D \ra \colim D \ra X$ is a weak equivalence.
\end{prop}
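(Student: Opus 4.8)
The plan is to reduce to fixed sets, exactly as in the proof of the preceding topological proposition, and then to invoke the purely simplicial statement established at the beginning of this section. Since a map of $G$-spaces is a weak equivalence if and only if each of its $H$-fixed maps is, and since $G$ acts internally to the diagram (it does not permute $I$), the bar-construction model for the homotopy colimit together with the identity $|B|^H = |B^H|$ from \cite{hocolim} gives $(\hocolim_I D)^H \iso \hocolim_I (D^H)$. Combining this with the natural isomorphisms $\phi_i$ and the equality $|\tilde D_i|^H = |(\tilde D_i)^H|$ identifies $D^H$ with the realization of the simplicial diagram $(\tilde D)^H \colon I \ra \sSet$. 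It therefore suffices to prove that for each $H \le G$ the non-equivariant map $\hocolim_I (D^H) \ra X^H$ obtained by restricting $\hocolim_I D \ra X$ to $H$-fixed points is a weak equivalence.

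To see this simplicially, note that the equivariant map $\hocolim_I D \ra X$ is adjoint under $|\cdot| \dashv \operatorname{Sing}$ to a map $\hocolim_I \tilde D \ra \operatorname{Sing} X$ of equivariant simplicial sets. Taking $H$-fixed points and using $(\operatorname{Sing} X)^H = \operatorname{Sing}(X^H)$ yields a map $\hocolim_I (\tilde D)^H \ra \operatorname{Sing}(X^H)$, to which I would apply the simplicial version from the start of this section with $G$ trivial and with target $\operatorname{Sing}(X^H)$.

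The crux is to match the fiber categories. A map $\sigma \colon G/H \times \Delta^n \ra X$ corresponds to $\bar\sigma \colon \Delta^n \ra X^H$, that is to an $n$-simplex of $\operatorname{Sing}(X^H)$, and as $H$ and $\sigma$ range over the hypothesis the $\bar\sigma$ exhaust all simplices of $\operatorname{Sing}(X^H)$. Under the adjunction $(G/H \times \blank) \dashv (\blank)^H$ in $G$-$\sSet$, an object $[i, G/H \times \Delta^n_s \ra \tilde D_i]$ of $\tilde F(D)_\sigma$ is precisely an $n$-simplex $\alpha \in ((\tilde D_i)^H)_n$, and the requirement that the realized composite be $\sigma$ becomes exactly the requirement that $\alpha$ map to $\bar\sigma$; morphisms correspond likewise. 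Hence $\tilde F(D)_\sigma$ is isomorphic to the simplicial fiber category of $(\tilde D)^H$ over $\bar\sigma$, which is contractible by hypothesis. The simplicial proposition then shows $\hocolim_I (\tilde D)^H \ra \operatorname{Sing}(X^H)$ is a weak equivalence; realizing it (realization commutes with $\hocolim$) and composing with the counit $|\operatorname{Sing}(X^H)| \ra X^H$, which is a weak equivalence and, by the triangle identity, recovers the map under study, shows that $\hocolim_I(D^H) \ra X^H$ is a weak equivalence. As this holds for every $H$, the original composite is an equivariant weak equivalence.

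The main obstacle is bookkeeping rather than conceptual content: one must justify that the bar construction for $\hocolim$ commutes with $(\blank)^H$ (which rests on the internal $G$-action and on $|B|^H = |B^H|$), and verify on both objects and morphisms that the realization--singular adjunction identifies $\tilde F(D)_\sigma$ with the simplicial fiber category of $(\tilde D)^H$ over $\bar\sigma$, including the compatibility of the map $\hocolim_I (\tilde D)^H \ra \operatorname{Sing}(X^H)$ with passage to $\colim$ so that the simplicial statement genuinely applies. Once these compatibilities are checked the result is a formal consequence of the already-established simplicial case.
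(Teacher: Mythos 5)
Your argument is correct, and its opening move --- reducing to $H$-fixed points via $(\hocolim_I D)^H \iso \hocolim_I(D^H)$ and the correspondence $\sigma \leftrightarrow \bar\sigma$ --- is exactly the paper's. Where you genuinely diverge is in what happens after that reduction: the paper's proof is a one-line citation of the non-equivariant version of this very proposition (Proposition 16.3 of \cite{hocolim}, the topological statement with simplicial models and the $\tilde F$ fiber categories), whereas you do not use that result at all. Instead you re-derive it from the purely simplicial proposition at the start of the section (applied with trivial group, i.e.\ the analogue of Proposition 16.9 of \cite{hocolim}): you transport the map across the adjunction $|\cdot| \dashv \operatorname{Sing}$ to get $\hocolim_I (\tilde D)^H \ra \operatorname{Sing}(X^H)$, match $\tilde F(D)_\sigma$ with the simplicial fiber category over $\bar\sigma$ using the adjunctions $(G/H \times \blank) \dashv (\blank)^H$ and $|\cdot| \dashv \operatorname{Sing}$, apply the simplicial proposition, and then realize and compose with the counit $|\operatorname{Sing}(X^H)| \ra X^H$, using the triangle identity to recover the original map. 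In effect you have inlined the proof of the result the paper outsources. What your route buys is self-containedness: only the section's first proposition and standard facts (realization commutes with hocolim and with fixed points, the counit is a weak equivalence) are needed. What the paper's route buys is brevity, at the price of leaning on an external reference. The bookkeeping you flag --- the fixed-point identification of the bar construction, the object- and morphism-level matching of fiber categories, and the factorization through $\colim$ so the simplicial proposition applies --- all goes through, so there is no gap.
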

\begin{proof}
Again, we can reduce to looking at fixed sets, this time invoking
Proposition 16.3 in \cite{hocolim}.
\end{proof}

For a $G$-fibration $f \colon E \ra X$, we can consider the diagram
$\Gamma_f \colon \Delta_G(X) \ra \Top$ that sends $\sigma \colon G/H
\times \Delta^n \ra X$ to the pullback $\sigma^*(E)$.  We then have
the following technical lemma used in the construction of the
spectral sequence.
\begin{lemma}
\label{lemma:hocowe} The map $\hocolim_{\Delta_G(X)} \Gamma_f \ra
\colim_{\Delta_G(X)} \Gamma_f \ra E$ is a weak equivalence.
\end{lemma}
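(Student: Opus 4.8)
The plan is to deduce this from Proposition \ref{prop:simphoco} applied to the diagram $D = \Gamma_f \colon \Delta_G(X) \ra \Top$, with the canonical map $p \colon \colim_{\Delta_G(X)} \Gamma_f \ra E$ playing the role of the map ``$\colim D \ra X$'' of that proposition (note that here the target space is the \emph{total} space $E$, not the base). The map $p$ is assembled from the pullback projections $\sigma^*(E) \ra E$, which are compatible with the morphisms $\overline{\varphi \times \alpha}$ that define $\Gamma_f$ on morphisms. Granting the hypotheses, the conclusion of that proposition is exactly that $\hocolim_{\Delta_G(X)} \Gamma_f \ra \colim_{\Delta_G(X)} \Gamma_f \ra E$ is a weak equivalence, which is the lemma. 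So everything reduces to (i) producing functorial simplicial models $\tilde{D}_\sigma$ of the pullbacks $\sigma^*(E)$ together with natural homeomorphisms $|\tilde{D}_\sigma| \ra \sigma^*(E)$, and (ii) showing that for each $n \geq 0$, each $H \leq G$, and each $\theta \colon G/H \times \Delta^n \ra E$, the fiber category $\tilde{F}(\Gamma_f)_\theta$ is contractible.

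For (ii), the first observation is that an object of $\tilde{F}(\Gamma_f)_\theta$ unwinds to very simple data. Since $\sigma^*(E) \subseteq \dom(\sigma) \times E$ and the required composite to $E$ is $\theta$, the $E$-coordinate of any lift is forced to equal $\theta$; an object is therefore a simplex $\tau \in \Delta_G(X)$ together with a simplicial map into (the model of) $\dom(\tau)$ lying over $f \circ \theta$. I would then exhibit a distinguished \emph{initial} object, namely the tautological pair consisting of $\sigma_0 := f \circ \theta \in \Delta_G(X)$ and the canonical lift $\gamma_0$ determined by $(\id_{\dom \sigma_0}, \theta)$ through the universal property of the pullback (well defined precisely because $\sigma_0 = f \circ \theta$). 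The rigidity that makes $\gamma_0$ initial, and the reason for passing to the simplicial refinement instead of the topological version, is that every simplicial $G$-map $G/H \times \Delta^n_s \ra G/K \times \Delta^m_s$ is automatically of product form $\varphi \times \beta$ with $\varphi$ a $G$-map of orbits and $\beta$ a simplicial operator; this fails for arbitrary continuous maps of the topological simplices. Reading off the base component of any object's structure map thus yields a unique morphism $\varphi \times \beta$ of $\Delta_G(X)$ out of $\sigma_0$, and the pullback universal property shows it transports $\gamma_0$ to the given lift. Hence $(\sigma_0, \gamma_0)$ is initial and $\tilde{F}(\Gamma_f)_\theta$ is contractible.

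The main obstacle is step (i): the fibration hypothesis on $f$ is what I expect to use here, to guarantee that the pullbacks $\sigma^*(E)$ admit a functorial simplicial model over $\Delta_G(X)$ whose realization is homeomorphic (not merely weakly equivalent) to $\sigma^*(E)$, so that Proposition \ref{prop:simphoco} genuinely applies. Over each $\dom(\sigma) = G/H \times \Delta^n$ the base is equivariantly contractible, so the fibration restricts to something of the homotopy type of the fiber crossed with the simplex, and one can hope to build the models fiberwise and glue them compatibly with the face and degeneracy data encoded in the morphisms of $\Delta_G(X)$. Arranging this coherence equivariantly---so that the chosen models assemble into an honest diagram $\tilde{D} \colon \Delta_G(X) \ra G\text{-}\sSet$ with the required natural isomorphism $|\tilde{D}| \ra \Gamma_f$---is where the real technical care lies; the categorical heart of the argument, the contractibility in (ii), is comparatively clean once the simplicial framework is in place.
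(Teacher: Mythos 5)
Your step (ii) is essentially correct, and it actually fills in details the paper leaves implicit: the tautological pair attached to $f \circ \theta$ is initial precisely because an equivariant simplicial map $G/H \times \Delta^n_s \ra G/K \times \Delta^m_s$ is forced to have product form $\varphi \times \beta$ (the $G/K$-component factors through $G/H$ since $\Delta^n_s$ is connected; the $\Delta^m_s$-component is $G$-invariant because the action on the simplex factor is trivial and $G$ acts transitively on $G/H$; and by Yoneda any simplicial map $\Delta^n_s \ra \Delta^m_s$ is a simplicial operator). This rigidity is exactly why the simplicial refinement, Proposition \ref{prop:simphoco}, is invoked instead of its topological precursor, and your identification of this point matches the paper's reasoning.

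The genuine gap is in step (i), and it is not just a matter of "technical care": the requirement you set yourself --- functorial simplicial models $\tilde{D}_\sigma$ with \emph{natural homeomorphisms} $|\tilde{D}_\sigma| \iso \sigma^*(E)$ --- cannot be met. A realization $|\tilde{D}_\sigma|$ is a $G$-CW complex, whereas $\sigma^*(E)$ is an arbitrary pullback of a fibration and need not have the homeomorphism type of any CW complex; and even in good cases there is no way to choose such homeomorphisms compatibly with all the morphisms of $\Delta_G(X)$. The fibration hypothesis only ever produces weak equivalences (your proposed fiberwise "fiber times simplex" models give homotopy equivalences at best), which is strictly weaker than what Proposition \ref{prop:simphoco} demands. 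The paper's resolution is to abandon modeling $\Gamma_f$ up to isomorphism: it defines a simplicial diagram $D$ on $\Delta_G(X)$ by pulling back singular complexes, $D(\tau) = (G/K \times \Delta^m_s) \times_{S(X)} S(E)$, observes that the natural map $|D| \ra \Gamma_f$ is an \emph{objectwise weak equivalence} because $f$ is a fibration (both sides compute the homotopy pullback), and then notes that since $\hocolim |D| \ra \hocolim \Gamma_f$ is a weak equivalence compatible with the maps to $E$, it suffices to prove the lemma with $|D|$ in place of $\Gamma_f$. For the diagram $|D|$ the hypotheses of Proposition \ref{prop:simphoco} hold on the nose, with $\tilde{D} = D$ and $\phi = \Id$, after which your initial-object argument (transported to these singular pullbacks, where the $S(E)$-coordinate of any lift is forced to be the adjoint of $\theta$) completes the proof. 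The missing idea, in short, is to replace $\Gamma_f$ by a weakly equivalent diagram that is tautologically a realization, rather than to rigidify $\Gamma_f$ itself.
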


\begin{proof}
Consider the diagram $D \colon \Delta_G(X) \ra G$-$\sSet$ sending
$([k], \alpha \colon G/H \times \Delta^k)$ to the simplicial set
obtained as the pull back $G/H \times \Delta^n_s \we S(G/H \times
\Delta^n) \ra S(X) \la S(E)$, where $S(-)$ is the singular functor.

There is a map of diagrams $|D| \ra \Gamma_f$ which is an objectwise
weak equivalence since $f$ is a fibration.  We are reduced to
showing that $\hocolim |D| \ra \colim |D| \ra E$ is a weak
equivalence.

For each $n \geq 0$, $H \leq G$, and $\sigma \colon G/H \times
\Delta^n \ra E$, the category $\tilde{F}(D)_\sigma$ is contractible.
This is due to the presence of an initial object associated to the
map $f \circ \sigma \colon G/H \times \Delta^n \ra X$.  By
Proposition \ref{prop:simphoco}, we are done.
\end{proof}

\section{Cohomology of Projective Bundles}
\label{sec:ProjBundles}


In this chapter, we specialize exclusively to the case where
$G=\Z/2$.

To any equivariant vector bundle $f\colon E \ra X$, there is an
associated equivariant projective bundle $\bP(f)\colon \bP(E)\ra X$
whose fibers are lines in the fibers of the original bundle.
Applying the spectral sequence of Theorem \ref{thm:localss} to this
new bundle yields the following result:
\begin{thm}
\label{thm:injection} If $X$ is equivariantly 1-connected and
$f\colon E \ra X$ is a vector bundle with fiber $\R^{n,m}$ over the
base point, then the spectral sequence of Theorem \ref{thm:localss}
for the bundle $\bP(f)\colon \bP(E)\ra X$ with constant
$M=\underline{\Z/2}$ coefficients ``collapses".
\end{thm}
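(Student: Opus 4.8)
The plan is to determine all of the differentials by comparing with the spectral sequence of the identity fibration $id\colon X \ra X$, using naturality together with the multiplicative structure and the fact that the fiber cohomology is pulled back from globally defined classes on the total space. First I would identify the fiber of $\bP(f)$ as the equivariant projective space $F=\bP(\R^{n,m})$, so that Theorem \ref{thm:localss} applies (here we use that $X$ is equivariantly $1$-connected) and the relevant coefficient system is the constant system $\underline{H}^{q,r}(F;\underline{\Z/2})$. I would then recall the known computation of $H^{*,*}(\bP(\R^{n,m});\underline{\Z/2})$ as an algebra over $\mathbb{M}=H^{*,*}(pt;\underline{\Z/2})$, the key point being that it is generated over $\mathbb{M}$ by the equivariant Euler class $c$ of the tautological line bundle.

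Next I would record the comparison map. Since $\bP(f)$ is itself a map of fibrations over $X$ from $\bP(f)\colon \bP(E)\ra X$ to $id\colon X \ra X$, naturality of the spectral sequence (Theorem \ref{thm:ss}) yields a map $E_r(id)\ra E_r(\bP(f))$ whose effect on fibers is the unit $H^{*,*}(pt)\ra H^{*,*}(F)$. Under the $\mathbb{M}$-algebra description of the fiber cohomology this identifies the image with the part coming from $1=c^0$, and the differentials transported along it are precisely the point-cohomology differentials of the identity spectral sequence, namely those producing the long exact sequence discussed in the introduction.

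The heart of the proof is to show that $c$ is a permanent cycle. For this I would use the tautological line bundle $L\ra \bP(E)$, whose restriction to each fiber is the tautological bundle of $\bP(\R^{n,m})$; its equivariant Euler class $\bar c\in H^{*,*}(\bP(E);\underline{\Z/2})$ restricts on the fiber to $c$. Since this restriction factors through the edge homomorphism onto $E_\infty^{0,*}$, the class $c$ is a permanent cycle, so $d_r(c)=0$ for all $r$. Invoking the Leibniz rule from the multiplicative structure (for the ring Mackey functor $\underline{\Z/2}$), every power $c^i$ is then a permanent cycle and every differential commutes with multiplication by $c$. Consequently the differentials preserve the decomposition of $E_r(\bP(f))$ into powers of $c$, and on each such summand the differential is a regraded copy of a differential of the identity spectral sequence. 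This is exactly the assertion that the only nonzero differentials are those arising from $id\colon X \ra X$.

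The main obstacle is this permanent-cycle step, that is, the construction of the global class $\bar c$ and the verification that it restricts to the fiber generator — an equivariant Leray--Hirsch input. Carrying it out requires pinning down the bidegree of the equivariant Euler class of $L$ and, more delicately, controlling the entire $\mathbb{M}$-module structure of $H^{*,*}(\bP(\R^{n,m}))$: one must account for any module generators not of the form $c^i$ and for any relations in the finite-dimensional case, so as to be certain that $c$-linearity of the differentials genuinely captures all of them.
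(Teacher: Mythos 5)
There is a genuine gap, and it sits exactly at the point you flag in your last paragraph but do not resolve: your argument rests on the claim that $H^{*,*}(\bP(\R^{n,m});\underline{\Z/2})$ is generated over $H^{*,*}(pt)$ by the single Euler class $c$ of the tautological bundle, and that claim is false equivariantly. By Theorem \ref{thm:rpinfty} one has $H^{*,*}(\R\bP^\infty_{tw})=H^{*,*}(pt)[a,b]/(a^2=\rho a+\tau b)$ with $\deg(a)=(1,1)$ and $\deg(b)=(2,1)$; since $\tau$ is not invertible, $b$ does not lie in the $H^{*,*}(pt)$-span of the powers of $a$ (by Lemma \ref{lemma:RPmodule} the module is free on $1,a,b,ab,b^2,\dots$), and by Proposition \ref{prop:ringRP} the finite, less-twisted spaces $\bP(\R^{n,m})$ require still further algebra generators $c_{i,m}$ in degrees $(2m+1,m),\dots,(n-1,m)$. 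Consequently, even granting that $c=a$ is a permanent cycle and that all differentials are $a$-linear, you have controlled only a proper subalgebra of the $E_2$-page: nothing constrains $db$ or $d(c_{i,m})$, so the asserted collapse does not follow. This is precisely where the real content of the paper's proof lies: from $a^2=\rho a+\tau b$ and the Leibniz rule, $0=d(a^2)=\rho\,da+\tau\,db$, hence $\tau\,db=0$, and since multiplication by $\tau$ is injective on the relevant coefficient systems, $db=0$; then for general $(n,m)$ the paper inducts on $n$, using the relation $a\,c_{n-1,m}=\tau\,c_{n,m}$ and $\tau$-injectivity again to force $d(c_{n,m})=0$. Without these $\tau$-injectivity arguments (or a substitute), the proof is incomplete.

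On the step you did address, your mechanism differs from the paper's and carries its own unproved input. You propose a Leray--Hirsch argument: a global equivariant Euler class $\bar c$ on $\bP(E)$ restricting to the fiber generator, so that the edge homomorphism forces $c$ to be a permanent cycle. The paper instead stabilizes the bundle to $E\oplus L$ (with $L=\R^{1,0}$ or $\R^{1,1}$ according to the parity of $n$), uses the canonical section $s\colon X\ra\bP(E\oplus L)$ to get $s^*\,da=d(s^*a)=0$, observes that $s^*$ is an isomorphism on the coefficient system $\underline{H}^{0,1}$ in which $da$ lives (so $da=0$), and then pulls back along $\bP(E)\subset\bP(E\oplus L)$. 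Your route presupposes a theory of equivariant Euler classes for $\Z/2$-line bundles --- machinery the paper's introduction explicitly notes has not been developed; one would have to construct $\bar c$ by hand, say as the pullback of $a$ along a classifying map $\bP(E)\ra\R\bP^\infty_{tw}$, and justify the edge-homomorphism identification in this equivariant spectral sequence. That could plausibly be made rigorous and would be a genuinely different proof of the permanent-cycle step, but as written it is an additional gap, whereas the section trick avoids the issue entirely.
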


Here, the spectral sequence ``collapses" in the sense that the only
nonzero differentials are those arising from the trivial fibration
$id \colon X \ra X$.  One can also view this collapsing by identifying the terms of the spectral sequence (at least after the $E_2$ page) as a certain tensor product.  Letting $E^{*,*}_*$ be the spectral sequence associated to the fibration $\bP(f)\colon \bP(E)\ra X$ above and $F^{*,*}_*$ be the one for $id\colon X \ra X$, then for $n\geq 2$ there is an isomorphism $E^{p,q}_n \cong (F^{*,*}_n \otimes H^{*,*}(\bP(R^{n,m})))^{p,q}$, where the tensor product is taken over $H^{*,*}(pt)$.

\begin{rmk} Despite the spectral sequence $E^{*,*}_*$ behaving like $F^{*,*}_* \otimes H^{*,*}(\bP(R^{n,m}))$, it need not be the case that $H^{*,*}(\bP(E))$ and $H^{*,*}(X)\otimes H^{*,*}(\bP(R^{n,m}))$ are isomorphic.
\end{rmk}

The projective spaces involved here have
actions on them induced by the action in the fibers.  Briefly, we
denote by $\R \bP^{n}_{tw}=\bP(\R^{n+1,\left\lfloor \frac{n+1}{2}
\right\rfloor})$, the equivariant space of lines in
$\R^{n+1,\left\lfloor \frac{n+1}{2} \right\rfloor}$.  For the other
projective spaces, we simply denote the space of lines in $\R^{n,m}$
by $\bP(\R^{n,m})$.  These projective spaces themselves are studied
in more detail in \cite{freeness}.  Some of the relevant results are
restated here for convenience.

\begin{lemma}
\label{lemma:easyRP} $\bP(\R^{p,q}) \cong \bP(\R^{p,p-q})$.

\end{lemma}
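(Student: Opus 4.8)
The plan is to realize both projective spaces on a common underlying space and to produce an equivariant homeomorphism whose underlying map is simply the identity. Both $\R^{p,q}$ and $\R^{p,p-q}$ have underlying vector space $\R^p$; they differ only in the linear involution recording the $\Z/2$-action. Consequently $\bP(\R^{p,q})$ and $\bP(\R^{p,p-q})$ have the same underlying ordinary projective space $\bP(\R^p)$, and the task reduces to comparing the two induced actions on lines.

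First I would fix coordinates and write $\R^{p,q}$ as $\R^p$ with the generator $g\in\Z/2$ acting by $\phi=\diag(1,\dots,1,-1,\dots,-1)$, having $p-q$ entries equal to $+1$ followed by $q$ entries equal to $-1$, and write $\R^{p,p-q}$ as $\R^p$ with $g$ acting by $\psi=\diag(-1,\dots,-1,1,\dots,1)$, having $p-q$ entries $-1$ followed by $q$ entries $+1$. The one substantive observation is that these involutions are complementary: $\phi=-\psi$, equivalently $\phi\circ\psi=-\mathrm{id}$.

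Next I would pass to lines. The identity map of $\R^p$ induces a homeomorphism $h\colon \bP(\R^{p,q})\ra\bP(\R^{p,p-q})$ of underlying spaces, its inverse again being induced by the identity. To check that $h$ is equivariant, observe that the action on the source sends a line $[v]$ to $[\phi(v)]$ while the action on the target sends $[v]$ to $[\psi(v)]$; since $\phi(v)=-\psi(v)$ and rescaling by a nonzero scalar does not change the line it spans, $[\phi(v)]=[-\psi(v)]=[\psi(v)]$. Hence $h$ intertwines the two actions, yielding the desired equivariant homeomorphism $\bP(\R^{p,q})\cong\bP(\R^{p,p-q})$.

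There is no serious obstacle here: the entire content is the scalar-triviality statement that $-\mathrm{id}$ acts as the identity on $\bP(\R^p)$, together with the complementarity $\phi=-\psi$. I expect the only care required to be bookkeeping, namely arranging the conventions so that the twisted coordinates of one representation are exactly the trivial coordinates of the other, after which equivariance is immediate.
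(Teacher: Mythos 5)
Your proof is correct. The paper itself offers no proof of this lemma (it is restated from \cite{freeness}), but your argument is the standard one: after arranging coordinates so that the two involutions $\phi$ and $\psi$ satisfy $\phi=-\psi$, the identity on the underlying projective space is equivariant because $-\mathrm{id}$ acts trivially on lines; equivalently, $\R^{p,p-q}\cong\R^{p,q}\otimes\R^{1,1}$ and projectivization is insensitive to tensoring with a one-dimensional representation. This fills the omitted proof exactly as intended.
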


\begin{lemma}
\label{lemma:RPmodule} As a $H^{*,*}(pt)$-module,
$H^{*,*}(\bP(\R^{p,q}))$ is free with a single generator in
dimensions $(0,0)$, $(1,1)$, $(2,1)$, $(3,2)$, $(4,2), \dots,$
$(2q,q)$, $(2q+1,q), \dots,$ $(p-1,q)$.

\end{lemma}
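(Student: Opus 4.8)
The plan is to build $\bP(\R^{p,q})$ out of representation spheres by an inductive filtration and to read the module structure off the resulting long exact sequences in $RO(\Z/2)$-graded cohomology. First I would apply Lemma \ref{lemma:easyRP} to replace $q$ by $p-q$ if necessary, and so assume $2q\le p$; this is exactly what makes the stated list of bidegrees meaningful. Next I would choose a flag of subrepresentations $\R^{1,0}=V_1\subset V_2\subset\cdots\subset V_p=\R^{p,q}$ obtained by adjoining one line at a time, taking the $q$ sign lines and the $p-q$ trivial lines in the order sign, trivial, sign, trivial, $\dots$ until the sign lines are exhausted and trivial lines thereafter; concretely $V_k=\R^{k,\lfloor k/2\rfloor}$ for $k\le 2q$ and $V_k=\R^{k,q}$ for $k\ge 2q$.

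For each $k$ the complement of $\bP(V_k)$ in $\bP(V_{k+1})=\bP(V_k\oplus L)$ is the affine space $\Hom(L,V_k)\iso L\tens V_k$ (using $L^{*}\iso L$ for real lines), so the cofiber sequence is $\bP(V_k)\ra\bP(V_{k+1})\ra S^{L\tens V_k}$, a representation sphere of topological dimension $k$. A direct computation of $L\tens V_k$ (with $\R^{1,1}\tens\R^{k,j}=\R^{k,k-j}$ when $L$ is the sign line) shows this sphere is $S^{k,\lceil k/2\rceil}$ for $k\le 2q$ and $S^{k,q}$ for $k\ge 2q$, exactly the bidegrees in the statement. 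I would then induct on $k$, the hypothesis being that $H^{*,*}(\bP(V_k))$ is free on the generators in dimensions $0,\dots,k-1$. The cofiber sequence yields a long exact sequence, and since the reduced cohomology of the cofiber is a shifted copy $\Sigma^{W}H^{*,*}(pt)$ of the cohomology of a point, the inductive step reduces to showing that the connecting map $\delta\colon\tilde H^{*,*}(\bP(V_k))\ra\tilde H^{*+1,*}(S^{W})$ vanishes. Once $\delta=0$, the long exact sequence becomes a short exact sequence whose quotient $H^{*,*}(\bP(V_k))$ is free, hence projective, so it splits and $H^{*,*}(\bP(V_{k+1}))$ is again free on the asserted generators.

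The vanishing of $\delta$ is the crux and is where I expect the work to be. As a map of $H^{*,*}(pt)$-modules of bidegree $(1,0)$, $\delta$ is determined by its values on the generators $g_0,\dots,g_{k-1}$. For the generators of topological dimension $\le k-2$ a degree count against Figure \ref{fig:pt} suffices: the coefficient of $g_\ell$ would have to lie in a bidegree of $H^{*,*}(pt)$ of strictly negative topological degree whose weight is too large to sit in the negative cone, and so it vanishes. The one genuinely dangerous case is $g_{k-1}$, whose image can land on the top class of $S^{W}$ precisely when the weights of $g_{k-1}$ and the new generator agree, i.e. when $k$ is even or $k\ge 2q$.

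Here the degree count fails, and the hard part will be to kill $\delta(g_{k-1})$ by a different mechanism. The plan is to invoke naturality of the forgetful map to underlying nonequivariant cohomology: the filtration above forgets to the standard cell filtration $\R\bP^{k-1}\ra\R\bP^{k}\ra S^{k}$ of real projective space, whose mod $2$ connecting map is zero, and the forgetful map carries the top class of $S^{W}$ isomorphically onto the generator of $\tilde H^{k}(S^{k})$. In the dangerous case $\delta(g_{k-1})$ lands exactly in this top class, so commuting $\delta$ past the forgetful map forces $\delta(g_{k-1})=0$. This naturality argument, standing in for the failed degree count, is the step I expect to require the most care, after which the induction closes and freeness with the stated generators follows.
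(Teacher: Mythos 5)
The paper does not actually prove this lemma: it is restated ``for convenience'' from \cite{freeness}, so there is no internal proof to compare against. Your argument is correct, and it is essentially a self-contained reconstruction of what the cited reference does: filter $\bP(\R^{p,q})$ by a flag of subrepresentations, identify each cofiber $\bP(V_{k+1})/\bP(V_k)$ with the representation sphere $S^{\Hom(L,V_k)}$ via the graph construction, and induct using the resulting long exact sequences. Your bookkeeping checks out: with $V_k=\R^{k,\lfloor k/2\rfloor}$ for $k\le 2q$ and $V_k=\R^{k,q}$ afterwards, the cofiber spheres are exactly $S^{k,\lceil k/2\rceil}$ for $k\le 2q$ and $S^{k,q}$ for $k\ge 2q$; the connecting map $\delta$ is an $H^{*,*}(pt)$-module map, hence determined on generators; for $\ell\le k-2$ the coefficient of $\delta(g_\ell)$ lies in $H^{a,b}(pt)$ with $a\le -1$ and $b\ge a-1$, which misses both cones in Figure \ref{fig:pt}, and for $\ell=k-1$ in the safe parity it lies in $H^{0,-1}(pt)=0$. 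Your identification of the genuine crux --- the weight-matching case ($k$ even or $k\ge 2q$), where the coefficient lives in $H^{0,0}(pt)=\Z/2$ --- is also right, and the naturality argument disposes of it: the forgetful map to singular cohomology commutes with connecting maps, carries the module generator of $\tilde H^{*,*}(S^{k,w})$ to the generator of $\tilde H^{k}(S^k;\Z/2)$, and the underlying nonequivariant connecting map for $\R\bP^{k-1}\subset\R\bP^{k}$ vanishes since $H^*(\R\bP^{k};\Z/2)\ra H^*(\R\bP^{k-1};\Z/2)$ is onto. It is worth emphasizing that this last step is genuinely necessary and cannot be replaced by citing the general freeness theorem of \cite{freeness} alone, since that theorem permits generators in degrees shifted away from the cell dimensions; pinning the generators to the cell dimensions requires exactly the vanishing argument you give. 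Two cosmetic points: your flag description says ``sign, trivial, sign, \dots'' while your formula $V_k=\R^{k,\lfloor k/2\rfloor}$ starts with a trivial line (either order yields the same cofiber spheres), and the lemma's list of bidegrees presupposes $2q\le p$, which your appeal to Lemma \ref{lemma:easyRP} correctly arranges.
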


\begin{thm}
\label{thm:rpinfty} $H^{*,*}(\R\bP^\infty_{tw})=H^{*,*}(pt)[a,
b]/(a^2=\rho a +\tau b)$, where $\deg(a)=(1,1)$ and $\deg(b)=(2,1)$.
\end{thm}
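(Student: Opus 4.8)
The plan is to take the $H^{*,*}(\pt)$-module structure as known from Lemma \ref{lemma:RPmodule} and reduce the whole theorem to the determination of a single multiplicative relation. Writing $\R\bP^\infty_{tw}=\colim_n \R\bP^n_{tw}=\colim_n \bP(\R^{n+1,\lfloor (n+1)/2\rfloor})$ and passing to the limit in Lemma \ref{lemma:RPmodule}, I would first record that $H^{*,*}(\R\bP^\infty_{tw})$ is a free $H^{*,*}(\pt)$-module with exactly one generator, call it $g_k$, in each bidegree $(k,\lceil k/2\rceil)$ for $k\geq 0$. Set $a=g_1\in H^{1,1}$ and $b=g_2\in H^{2,1}$. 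A short bidegree count in this free module shows that $(2j,j)$ is one-dimensional, spanned by $g_{2j}$, and that $(2j+1,j+1)$ is spanned by $g_{2j+1}$ and $\rho g_{2j}$. Restricting to the underlying nonequivariant space $\R\bP^\infty$, under which $\rho\mapsto 0$, $\tau\mapsto 1$, $a\mapsto x$ and $b\mapsto x^2$ with $H^{*}_{sing}(\R\bP^\infty;\Z/2)=\Z/2[x]$, gives $b^j\mapsto x^{2j}\neq 0$ and $ab^j\mapsto x^{2j+1}\neq 0$. Hence $b^j$ is a unit multiple of $g_{2j}$ and $ab^j=g_{2j+1}+\epsilon\,\rho b^j$ with the $g_{2j+1}$-coefficient equal to $1$, so $a$ and $b$ generate $H^{*,*}(\R\bP^\infty_{tw})$ as an $H^{*,*}(\pt)$-algebra.

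Next I would isolate the only possible relation. Since there is no module generator in bidegree $(2,2)$, the class $a^2\in H^{2,2}$ must be decomposable, and a bidegree count shows $H^{2,2}$ is the three-dimensional $\Z/2$-vector space with basis $\{\rho^2,\ \rho a,\ \tau b\}$, so necessarily
\[a^2=\epsilon_1\rho^2+\epsilon_2\rho a+\epsilon_3\tau b,\qquad \epsilon_i\in\Z/2.\]
Once the $\epsilon_i$ are pinned down, the quotient ring $H^{*,*}(\pt)[a,b]/(a^2-\epsilon_1\rho^2-\epsilon_2\rho a-\epsilon_3\tau b)$ is, by a direct check of its bigraded Poincar\'e series, a free $H^{*,*}(\pt)$-module on $\{b^j,ab^j\}_{j\geq 0}$ with generators in exactly the bidegrees found above. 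The evident algebra map from this ring to $H^{*,*}(\R\bP^\infty_{tw})$ is then well defined (because the relation holds in the target), surjective (the generators $a,b$ hit everything, by the previous paragraph), and a bidegreewise isomorphism of free modules of matching rank. So the entire theorem reduces to computing the three coefficients.

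The coefficients are found by restriction. The underlying-space restriction already used sends the relation to $x^2=\epsilon_3 x^2$, giving $\epsilon_3=1$. To get $\epsilon_1,\epsilon_2$ I would restrict along $\bP(\R^{2,1})=\R\bP^1_{tw}\hookrightarrow \R\bP^\infty_{tw}$, under which $a$ maps to the degree $(1,1)$ generator and $b\mapsto 0$ since $H^{2,1}(\R\bP^1_{tw})=0$; the relation becomes $a^2=\epsilon_1\rho^2+\epsilon_2\rho a$ in $H^{2,2}(\bP(\R^{2,1}))$. Finally I would restrict to the two $\Z/2$-fixed points of $\bP(\R^{2,1})$, namely the trivial and the sign eigen-lines of $\R^{2,1}$: identifying $a$ as the Euler class of the tautological line bundle, it restricts to $0$ over the trivial fixed line and to $\rho$ over the sign fixed line, while $\rho$ restricts to $\rho$ at both points. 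Comparing the two coordinates of $a^2=\epsilon_1\rho^2+\epsilon_2\rho a$ forces $\epsilon_1=0$ and $\epsilon_1+\epsilon_2=1$, hence $a^2=\rho a+\tau b$. I expect the main obstacle to be precisely this last step: justifying the fixed-point restriction of $a$ (equivalently, identifying $a$ with the equivariant Euler class and evaluating it at each fixed line), since the underlying-space restriction alone cannot separate $\rho^2$, $\rho a$, and $\tau b$. Everything else is bookkeeping in the free module.
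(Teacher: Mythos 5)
You should know at the outset that this paper contains no proof of Theorem \ref{thm:rpinfty} to compare against: it is one of the results ``restated here for convenience'' from \cite{freeness}, where the ring structure is obtained, after the freeness theorem, from restriction/fixed-point data. Judged on its own terms, your proposal is correct and is in fact close in spirit to that source. Your bookkeeping is right: in the free module on generators $g_k$ in bidegrees $(k,\lceil k/2\rceil)$ one has $H^{2j,j}=\langle g_{2j}\rangle$, $H^{2j+1,j+1}=\langle g_{2j+1},\rho g_{2j}\rangle$, and $H^{2,2}=\langle \rho^2,\rho a,\tau b\rangle$; and your comparison map from $H^{*,*}(pt)[a,b]/(a^2-\epsilon_1\rho^2-\epsilon_2\rho a-\epsilon_3\tau b)$, which is visibly free on $\{b^j,ab^j\}$ since the relation is monic of degree $2$ in $a$, does reduce everything to the three coefficients.

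Two steps need more justification than you give, though neither fails. First, your repeated use of ``a free module generator restricts nonzero to singular cohomology'' ($a\mapsto x$, $b\mapsto x^2$, and implicitly $g_{2j+1}\mapsto x^{2j+1}$) is not automatic from freeness alone. It follows from the long exact sequence obtained by smashing $X_+$ with the cofiber sequence $\Z/2_+\ra S^0\ra S^{1,1}$, which identifies the kernel of the forgetful map $\psi\colon H^{p,q}(X)\ra H^p_{sing}(X;\Z/2)$ with the image of multiplication by $\rho$ on $H^{p-1,q-1}(X)$; freeness then gives exactly the nonvanishing you assert, and this sequence is also the cleanest reason why $\rho\mapsto 0$ and $\tau\mapsto 1$. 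Second, the step you flag as the main obstacle can be filled without any Euler-class machinery: $\bP(\R^{2,1})\cong S^{1,1}$ equivariantly (send the line at angle $\theta$ to the point at angle $2\theta$), so $H^{*,*}(\bP(\R^{2,1}))$ is free on $1$ and the suspension class $\sigma\in\tilde{H}^{1,1}(S^{1,1})$, and by the definition of $\rho$ as the image of $\sigma$ under the fixed-point inclusion $S^0\hookrightarrow S^{1,1}$, the two possible degree-$(1,1)$ generators $\sigma$ and $\sigma+\rho$ restrict to $0$ at one fixed point and to $\rho$ at the other. Note the unavoidable ambiguity $a\leftrightarrow a+\rho$ here (your restriction of $a$ to $\bP(\R^{2,1})$ is only pinned down modulo $\rho$), but it is harmless: your two equations $0=\epsilon_1\rho^2$ and $\rho^2=(\epsilon_1+\epsilon_2)\rho^2$ are symmetric in the two fixed points, and the stated relation $a^2=\rho a+\tau b$ is itself invariant under replacing $a$ by $a+\rho$. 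With those two points supplied, your argument is a complete proof.
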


\begin{prop}
\label{prop:ringRP} $H^{*,*}(\bP(\R^{p,q}))$ is a truncated
polynomial algebra over $H^{*,*}(pt)$ on generators in dimensions
$(1,1)$, $(2,1)$, $(2q+1,q)$, $(2q+2,q), \dots,$ $(p-1,q)$, subject
to the relations determined by the restriction of
$H^{*,*}(\R\bP^\infty_{tw})$ to $H^{*,*}(\bP(\R^{p,q}))$.

\end{prop}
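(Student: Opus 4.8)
The plan is to transport the ring structure of the infinite twisted projective space to the finite one along a restriction map, and then to fill in the missing module generators by hand. The inclusion of representations $\R^{p,q} \hookrightarrow \R^{\infty,\infty}$ induces a $\Z/2$-equivariant map $\iota\colon \bP(\R^{p,q}) \to \R\bP^\infty_{tw}$, hence a ring homomorphism $\iota^*\colon H^{*,*}(\R\bP^\infty_{tw}) \to H^{*,*}(\bP(\R^{p,q}))$. I set $a=\iota^*(a)$ and $b=\iota^*(b)$; by naturality and Theorem \ref{thm:rpinfty} these satisfy $a^2=\rho a+\tau b$ in $H^{*,*}(\bP(\R^{p,q}))$. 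Using Lemma \ref{lemma:easyRP} I may assume $2q\le p$, so that $\bP(\R^{2q+1,q})$ (when $2q+1\le p$) includes into $\bP(\R^{p,q})$ and the generator list of Lemma \ref{lemma:RPmodule} is genuinely as stated.

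First I would reduce any monomial in $a,b$ via $a^2=\rho a+\tau b$ to $H^{*,*}(pt)$-linear combinations of $\{b^j,\,ab^j\}$, exactly as in the presentation of $H^{*,*}(\R\bP^\infty_{tw})$. Degree bookkeeping shows $\iota^*(b^k)$ lies in bidegree $(2k,k)$ and $\iota^*(ab^{k-1})$ in bidegree $(2k-1,k)$, which for $1\le k\le q$ are precisely the degrees of the first $2q+1$ free module generators of Lemma \ref{lemma:RPmodule}. Comparing with the further restriction to $\bP(\R^{2q+1,q})$ — whose cohomology is free of rank $2q+1$ and consists of exactly this bottom part — shows these classes are nonzero and form a basis of the sub-$H^{*,*}(pt)$-module spanned by the generators in degrees $(0,0)$ through $(2q,q)$. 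Hence $\iota^*$ is an isomorphism in this range and the ring structure there is just the truncation of that of $\R\bP^\infty_{tw}$.

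Next I would account for the remaining free module generators, which lie in bidegrees $(2q+1,q),(2q+2,q),\dots,(p-1,q)$. A direct degree count — the positive cone $\rho^i\tau^j$ occupies bidegrees $(i,i+j)$, so every monomial $b^j$ or $ab^j$ scaled by a point class has second coordinate strictly exceeding $q$ once its first coordinate passes $2q$ — shows none of these generators can lie in the image of $\iota^*$. I therefore adjoin new algebra generators $c_{2q+1},\dots,c_{p-1}$ in these bidegrees; together with $a$ and $b$ they generate $H^{*,*}(\bP(\R^{p,q}))$ as an $H^{*,*}(pt)$-algebra, matching the generator list in the statement. The remaining relations — the values of $\iota^*(ab^q),\iota^*(b^{q+1}),\dots$, which truncate the $a,b$-polynomial part from above by expressing them among the $\rho^i\tau^j c_\ell$ or as $0$, together with the products $a c_\ell$, $b c_\ell$, $c_\ell c_m$ — are each forced into the free basis of Lemma \ref{lemma:RPmodule} and are read off from $\iota^*$; this is the content of the phrase ``relations determined by the restriction of $H^{*,*}(\R\bP^\infty_{tw})$''.

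The main obstacle is precisely this last step: although the module is free and the generators are pinned down formally, determining the full multiplication table — in particular the products involving the new classes $c_\ell$ and the exact truncation relations — is not formal and requires explicit knowledge of the restriction map, which is where the substantive work (carried out in \cite{freeness}) lies. A secondary technical point is the injectivity of $\iota^*$ in the bottom range; I would obtain this from the comparison with $\bP(\R^{2q+1,q})$ above and the naturality of restriction, rather than from a direct chain-level computation.
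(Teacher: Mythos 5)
Note first that the paper contains no proof of Proposition \ref{prop:ringRP}: like Lemma \ref{lemma:easyRP}, Lemma \ref{lemma:RPmodule} and Theorem \ref{thm:rpinfty}, it is restated from \cite{freeness} ``for convenience,'' so there is no in-paper argument to compare against. Your framework --- pull $a$ and $b$ back along $\iota\colon \bP(\R^{p,q}) \hookrightarrow \R\bP^\infty_{tw}$, use $a^2=\rho a+\tau b$, and match monomials against the free basis of Lemma \ref{lemma:RPmodule} --- is certainly the intended reading of the statement, and you are candid that the full multiplication table (products $ac_\ell$, $bc_\ell$, $c_\ell c_m$, and the exact truncation of the $a,b$-part) is deferred to \cite{freeness}. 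That deferral means what you have is a proof sketch, not a proof; the paper makes the same deferral, but it does so openly by citation rather than by presenting a proof.

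Beyond the admitted deferral, two of your intermediate steps have genuine gaps. (1) Your justification that $\iota^*(b^k)$ and $\iota^*(ab^{k-1})$ are nonzero and realize the bottom $2q+1$ module generators is circular: Lemma \ref{lemma:RPmodule} gives only the \emph{module} structure of $H^{*,*}(\bP(\R^{2q+1,q}))$, so ``comparing with the further restriction to $\bP(\R^{2q+1,q})$'' presupposes that the module generators of that space are monomials in $a,b$ --- which is precisely the instance of this proposition for $\bP(\R^{2q+1,q})$. A non-circular route is to apply the forgetful map to singular cohomology: $a$ and $b$ forget to $z$ and $z^2$ in $H^*_{sing}(\R\bP^{p-1};\Z/2)$, so $\iota^*(b^k)$ and $\iota^*(ab^{k-1})$ are nonzero whenever their degree is at most $p-1$; since each bidegree $(2k,k)$, $(2k-1,k)$ with $k\le q$ contains a single $\Z/2$ spanned by a module generator, nonvanishing identifies these classes with the generators. (2) Your degree count showing the classes in bidegrees $(2q+r,q)$, $r\ge 1$, cannot lie in the image of $\iota^*$ accounts only for the positive cone $\rho^i\tau^j$ of $H^{*,*}(pt)$, but $H^{*,*}(pt)$ also contains the negative cone (the class $\theta$ and its divisions by $\rho$ and $\tau$). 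For example $\theta\cdot b^{q+2}$ is a nonzero element of $H^{2q+4,q}(\R\bP^\infty_{tw})$, landing exactly in one of the bidegrees you claimed unreachable, so the count fails for $r\ge 4$. (Its restriction does vanish, but verifying that requires knowing how $b^{q+2}$ decomposes in the free basis of $H^{*,*}(\bP(\R^{p,q}))$ --- i.e., part of the very multiplication table you deferred.)
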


With these observations, we are ready to prove Theorem
\ref{thm:injection}.

\begin{proof}[Proof of Theorem \ref{thm:injection}]

By Lemma \ref{lemma:easyRP} we need only consider the case where
$n\geq m/2$.

First, consider the case where the vector bundle has fiber
$\R^{n,\left\lfloor \frac{n}{2} \right\rfloor}$ over the base point.

If $n$ is odd, consider the vector bundle $E \oplus \R^{1,0} \ra X$,
and if $n$ is even consider $E \oplus \R^{1,1} \ra X$. In either
case, denote this new bundle by $E \oplus L$.  Taking the associated
projective bundles gives a diagram

\[ \xymatrix{  \R\bP^{n-1}_{tw} \ar[r] \ar[d] & \R\bP^{n}_{tw} \ar[r] \ar[d] & pt \ar[d]\\
    \bP(E)  \ar[r] \ar[d] & \bP(E \oplus L) \ar  [r] _>>>>{\bP(f)}  \ar [d]_{\bP(f)}  & X \ar @/_1pc/ [l]<-0.5ex> _{s} \ar[d]^{id}\\
    X \ar[r]^{id} & X \ar[r]^{id} \ar @/_/ [u]<-0.5ex>_>>>>>s & X}
\]
Here and below $pt$ is the one point set with trivial $\Z/2$ action.
The map $s$ above is the canonical splitting that assigns to each
point $x$ in $X$ the line given by the trivial factor in $E \oplus
L$. It is important to note that this is indeed an equivariant
splitting in both the case of $\R^{1,0}$ and $\R^{1,1}$. This
diagram yields maps between the spectral sequences associated to
these three bundles over $X$.  Let us consider the $r=1$ spectral
sequence for $\bP(E \oplus L)$.  This is the sequence with
$E_2$-page given by $$E_2^{p,q} = H^{p,0}(X;
\underline{H}^{q,1}(\R\bP^{n}_{tw})) \Rightarrow H^{p+q,1}(\bP(E
\oplus L)).$$ This spectral sequence is generated as an algebra over
$H^{*,*}(pt)$ by the classes $a \in H^{0,0}(X; H^{1,1}(\R
\bP^{n}_{tw}))$ and $b \in H^{0,0}(X; H^{2,1}(\R \bP^{n}_{tw}))$. To
see that the spectral sequence collapses, we need only see that
these classes $a$ and $b$ have trivial differentials.

The splitting $s$ induces a map $s^*$ from the $r=1$ spectral
sequences associated to the bundle $f:\bP(E) \ra X$ to the one for
the trivial bundle $id \colon X \ra X$.  This map sends the class $a
\in H^{0,0}(X; H^{1,1}(\R\bP^{n}_{tw}))$ to $0 \in H^{0,0}(X;
H^{1,1}(pt))$ since $s^*\colon H^{*,*}(\R\bP^{n}_{tw}) \ra
H^{*,*}(pt)$ is the projection.  Thus $s^*da = d(s^* a) = 0$.
However, $s^*$ gives an isomorphism between coefficient systems
$\underline{H}^{0,1}(\R\bP^{n}_{tw}) \iso_{s^*}
\underline{H}^{0,1}(pt)$.  Observe:

\begin{tabular}{ll}
$\left(\underline{H}^{0,1}(\R\bP^{n}_{tw})\right)(\Z/2)$ & $=H^{0,1}(\Z/2 \times \R\bP^{n}_{tw})$ \\

& $=[\Z/2 \times \R\bP^{n}_{tw}, K(\Z/2(1),0)]_{\Z/2}$ \\

& $= [\R\bP^{n}_{tw},K(\Z/2,0)]_e$ \\

& $= H^0_{sing}(\R\bP^{n};\Z/2)$ \\

& $\iso_{s^*} H^0_{sing}(pt;\Z/2)$ \\

& $= H^{0,1}(pt)$ \\

& $= \left(\underline{H}^{0,1}(pt)\right)(\Z/2).$
\end{tabular}

Here, $K(A(q),p)$ is the representing space for $H^{p,q}( -
;\underline{A})$.

Also, $\underline{H}^{0,1}(\R\bP^{n}_{tw})(G/G)=
H^{0,1}(\R\bP^{n}_{tw}) \iso_{s^*} H^{0,1}(pt)$.  It now follows
that since $s^*da=0$, it must be that $da=0$.

Now, from the relation $a^2 = \rho a + \tau b$ we get that $0 =
d(a^2) = \rho da + \tau db$.  Hence $\tau db = 0$.  But,
$\tau\colon\underline{H}^{1,1}(\R\bP^{n}_{tw}) \ra
\underline{H}^{1,2}(\R\bP^{n}_{tw})$ is an isomorphism.  Thus
$db=0$.

Now, $\bP(i)^*(a) =a$ and $\bP(i)^*(b) =b$, where $\bP(i)^*: \bP(E
\oplus L) \ra \bP(E)$.  Thus $d(a)=0$ and $d(b)=0$ in the spectral
sequence for $\bP(E)$ as well. Therefore the spectral sequence
``collapses,'' in the sense that all differentials are zero, except
for the part of the spectral sequence corresponding to the trivial
fibration $id\colon X \ra X$.

For the other projective spaces, we can proceed inductively.  Fix
$m$ and induct on $n \geq m/2$.  The base case is exactly the
argument above.  For the inductive step, in going from
$\bP(\R^{n,m})$ to $\bP(\R^{n+1,m})$, a single new cohomology
generator $c_{n,m}$ appears in degree $(n,m)$, according to Lemma
\ref{lemma:RPmodule}.  Also, by Proposition \ref{prop:ringRP}, we
have $a c_{n-1,m} = \tau c_{n,m}$, where $c_{n-1,m}$ is the highest
dimensional cohomology generator in $H^{*,*}(\bP(\R^{n,m}))$.  Then
in the spectral sequence we have $d(a c_{n-1,m}) = \tau d(c_{n,m})$.
But, by induction, $d(a c_{n-1,m})=0$.  Since $\cdot \tau$ is still
an injection in the range we are working in, it must be that
$d(c_{n,m})=0$.  This gives the desired collapsing of the spectral
sequence.
\end{proof}

In fact, we can deduce even more about such a projective bundle.

\begin{cor} If $f\colon E \ra X$ is a vector bundle with $X$ equivariantly 1-connected with fiber $\R^{n,m}$ over the base point, then $\bP(F)^* \colon H^{*,*}(X) \ra H^{*,*}(\bP(E))$ is an injection.
\end{cor}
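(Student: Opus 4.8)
The plan is to realize the pullback $\bP(f)^{*}\colon H^{*,*}(X)\ra H^{*,*}(\bP(E))$ induced by the bundle projection as the map on abutments of a morphism of spectral sequences, and then to exploit the collapsing recorded in Theorem~\ref{thm:injection}. First I would note that $\bP(f)$ is a map of $\Z/2$-fibrations over $X$: it is $\bP(f)\colon \bP(E)\ra X$ on total spaces and $id\colon X\ra X$ on bases, forming a commuting square whose two downward maps are $\bP(f)$ and $id_{X}$, and which on fibers is the collapse $\bP(\R^{n,m})\ra pt$. Since the spectral sequence of Theorem~\ref{thm:ss} is natural, this map of fibrations induces a morphism $F^{*,*}_{*}\ra E^{*,*}_{*}$ from the spectral sequence of $id\colon X\ra X$ to that of $\bP(f)$, and the map it induces on abutments is exactly $\bP(f)^{*}\colon H^{*,*}(X)\ra H^{*,*}(\bP(E))$.

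Next I would analyze this morphism on the pages $E_{n}$ for $n\geq 2$. On $E_{2}$ it is the map $H^{p,0}(X;\underline{H}^{q,r}(pt))\ra H^{p,0}(X;\underline{H}^{q,r}(\bP(\R^{n,m})))$ induced by the coefficient map coming from $\bP(\R^{n,m})\ra pt$, which carries $1$ to $1$. Under the identification $E^{*,*}_{n}\cong \bigl(F^{*,*}_{n}\otimes_{H^{*,*}(pt)} H^{*,*}(\bP(\R^{n,m}))\bigr)^{*,*}$ recorded after Theorem~\ref{thm:injection}, this morphism should be precisely $x\mapsto x\otimes 1$. Because $H^{*,*}(\bP(\R^{n,m}))$ is a \emph{free} $H^{*,*}(pt)$-module with $1$ among its generators (Lemma~\ref{lemma:RPmodule}), the target is a free $F^{*,*}_{n}$-module on those generators, and $x\mapsto x\otimes 1$ is the inclusion of the summand indexed by $1$; hence $F^{*,*}_{n}\ra E^{*,*}_{n}$ is a split injection. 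Since the tensor identification holds on every page $n\geq 2$, this morphism is injective on every page, and as the pages stabilize in each bidegree ($E_{\infty}=E_{N}$ for $N\gg 0$) it is injective on $E_{\infty}$.

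Finally I would promote injectivity on $E_{\infty}$ to injectivity on the abutment. The morphism $F\ra E$ respects the convergent filtrations, and in each total bidegree $(p+q,r)$ the filtration is finite (only $0\leq p\leq p+q$ contribute), hence Hausdorff and exhaustive. The standard associated-graded argument then applies: if $0\neq \xi\in H^{*,*}(X)$, pick the level $p$ with $\xi\in F^{p}\setminus F^{p+1}$, so $\bar\xi\neq 0$ in the corresponding $E_{\infty}$ term; injectivity of the induced map on $E_{\infty}$, which is $\mathrm{gr}(\bP(f)^{*})$, forces $\bP(f)^{*}\xi\notin F^{p+1}$, so $\bP(f)^{*}\xi\neq 0$. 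This yields injectivity of $\bP(f)^{*}$. The step I expect to require the most care is the identification of the spectral sequence morphism $F\ra E$ with $x\mapsto x\otimes 1$ under the tensor decomposition: one must check that the fiber classes generating $H^{*,*}(\bP(\R^{n,m}))$ (the $a$, $b$, and the $c_{n,m}$ of the proof of Theorem~\ref{thm:injection}) are permanent cycles restricting compatibly to $1$, so that the $F^{*,*}_{*}$-module structure and the fiber inclusion of the unit really do agree. This is precisely where the content of Theorem~\ref{thm:injection}, and not merely its statement, is used.
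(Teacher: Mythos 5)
Your proposal is correct and follows essentially the same route as the paper: naturality gives a morphism from the spectral sequence of $id\colon X \ra X$ to that of $\bP(f)$, the collapsing of Theorem \ref{thm:injection} (which you justify via the tensor decomposition and the freeness of $H^{*,*}(\bP(\R^{n,m}))$ over $H^{*,*}(pt)$ from Lemma \ref{lemma:RPmodule}) makes this morphism injective on every page and hence on $E_\infty$, and a finite-filtration associated-graded argument promotes injectivity to the abutments. The only cosmetic difference is that you carry out that last filtration induction inline, whereas the paper isolates it as a separate lemma stated immediately after the corollary.
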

\begin{proof}
By the preceding theorem, there is a natural injection of the
spectral sequence for $id_X$ into the one for $\bP(f)$, thus an
injection on the filtrations. We get an injection on the $E_\infty$
terms, and thus, by the following lemma, an injection $H^{*,*}(X)
\ra H^{*,*}(\bP(E))$.
\end{proof}

\begin{lemma} Let $f \colon E^{p,q}_r \ra F^{p,q}_r$ be a map of first quadrant spectral sequences, converging to $A_{p+q}$ and $B_{p+q}$ respectively, which is an injection for every $p$, $q$, and $r$. Then $f$ induces an injection  $\tilde{f} \colon A_{p+q} \ra B_{p+q}$.
\end{lemma}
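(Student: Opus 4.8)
The plan is to split the argument into two standard moves: first promote the page-wise injectivity to injectivity on the $E_\infty$ page, and then deduce injectivity of the abutment map from injectivity on the associated graded of a bounded filtration.

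First I would use the first-quadrant hypothesis to reach $E_\infty$. For each fixed bidegree $(p,q)$ the incoming and outgoing differentials at that spot eventually have zero source or zero target, so there is an $r_0$ with $E^{p,q}_r = E^{p,q}_\infty$ and $F^{p,q}_r = F^{p,q}_\infty$ for all $r \geq r_0$. Since $f$ is assumed injective on every page, choosing any such $r$ and passing to the limit exhibits the induced map $f_\infty \colon E^{p,q}_\infty \ra F^{p,q}_\infty$ as injective for every $(p,q)$. (Note this is the only place the hypothesis ``for every $r$'' is used; injectivity on a single page need not propagate through the homology, so we genuinely want it on the stable page.)

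Next I would invoke the convergence data. Write the associated decreasing filtrations as $0 = F^{n+1}A_n \subseteq F^n A_n \subseteq \cdots \subseteq F^0 A_n = A_n$ and similarly for $B_n$; these are \emph{finite}, hence exhaustive and Hausdorff, precisely because the spectral sequences live in the first quadrant. The hypothesis that $f$ is a map of spectral sequences converging to $A$ and $B$ supplies the filtration-preserving abutment map $\tilde f\colon A_n \ra B_n$, with $\tilde f(F^p A_n) \subseteq F^p B_n$, whose induced map on the associated graded piece $F^p A_n / F^{p+1}A_n \iso E^{p,n-p}_\infty$ is exactly $f_\infty$. To finish, suppose $a \in A_n$ is nonzero with $\tilde f(a) = 0$. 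Because the filtration is bounded, there is a largest $p$ with $a \in F^p A_n$, so the class $\bar a$ of $a$ in $F^p A_n / F^{p+1} A_n = E^{p,n-p}_\infty$ is nonzero. Since $\tilde f(a) = 0 \in F^{p+1}B_n$, the class of $\tilde f(a)$ in $F^p B_n / F^{p+1} B_n$ vanishes; but that class equals $f_\infty(\bar a)$, so $f_\infty(\bar a) = 0$, contradicting the injectivity established in the first step. Hence $\tilde f$ is injective.

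The main obstacle is not the algebra but the bookkeeping: one must be careful that a morphism of convergent first-quadrant spectral sequences really does carry a filtration-preserving map $\tilde f$ on the abutments that induces $f_\infty$ on associated graded. Once that compatibility is recorded, the remaining ``graded-injective $\Rightarrow$ filtered-injective'' step is routine and depends only on the boundedness of the filtrations.
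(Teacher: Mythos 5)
Your proof is correct and follows essentially the same route as the paper's: both reduce the claim to injectivity on the $E_\infty$ page (using first-quadrant stabilization) together with the finiteness of the convergence filtration, and then transfer graded injectivity to the abutment. The paper phrases this last step as an induction up the filtration $0 \subseteq A_0 \subseteq \cdots \subseteq A_n$ while you argue by taking the leading filtration degree of a hypothetical nonzero kernel element, but these are the same argument in different clothing; if anything, you are more explicit than the paper about why page-wise injectivity yields injectivity at $E_\infty$.
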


\begin{proof}
Fix $n$.  Then there is a filtration $0 \subseteq A_0 \subseteq
\cdots \subseteq A_n$ with $A_i/A_{i-1} \cong E^{n-i,i}_\infty$.
Similarly, there is a filtration $0 \subseteq B_0 \subseteq \cdots
\subseteq B_n$ with $B_i/B_{i-1} \cong F^{n-i,i}_\infty$.  Notice
that $A_0 = E^{n,0}_\infty$ and $B_0 = F^{n,0}_\infty$.  Thus
$\tilde{f}_0 \colon A_0 \ra B_0$ is injective.  Induction starts.

Suppose that $\tilde{f}_i \colon A_i \ra B_i$ is injective.  We also
know that $f_{i+1} \colon A_{i+1}/A_i \ra B_{i+1}/B_i$ is injective.
We have a map $\tilde{f}_{i+1} \colon A_{i+1} \ra B_{i+1}$ that
restricts to $\tilde{f}_i$ and we want to see that $\tilde{f}_{i+1}$
is injective.  Suppose $\tilde{f}_{i+1}(a) = 0$.  Then
$f_{i+1}([a])=0$.  But this map is injective, so $a \in A_i$.  Since
$\tilde{f}_{i+1}$ restricts to $\tilde{f}_i$ on $A_i$, we have that
$\tilde{f}_{i+1}(a) = \tilde{f}_i(a) = 0$.  As $\tilde{f}_i$ is
injective, $a=0$.  By induction, $\tilde{f}_n = \tilde{f}$ is
injective.

\end{proof}


\section{Equivariant Adams-Hilton Construction}
\label{sec:AH}

This sections provides a $G$-representation complex structure to the
space of Moore loops of a $G$-representation space $Y$ under certain
assumptions on the types of cells involved.

Let $(Y,*)$ be a based $G$-space.  Let $\Omega^M(Y,*) \subseteq
Map([0,\infty),Y) \times [0,\infty)$ denote the subspace of all
pairs $(\varphi,r)$ for which $\varphi(0)=*$ and $\varphi(t)=*$ for
$t \geq r$.  The space $\Omega^M(Y,*)$ is the space of Moore loops
of Y.  It inherits a $G$-action given by $g \cdot (\varphi,r) =
(g\cdot \varphi, r)$, where $(g\cdot\varphi)(t) = g\cdot
\varphi(t)$. (The action of $G$ on both $\R$ and $[0,\infty)$ are
assumed to be trivial, so this is the usual diagonal action of $G$
on a product restricted to the subspace of Moore loops.)

\begin{prop} $\Omega(Y,*)$ is a $G$-deformation retract of $\Omega^M(Y,*)$.

\end{prop}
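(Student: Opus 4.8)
The plan is to exhibit the classical length-rescaling deformation retraction and observe that it is automatically $G$-equivariant. First I would fix an identification of $\Omega(Y,*)$ with a subspace of $\Omega^M(Y,*)$: a based loop $\omega \colon [0,1] \to Y$ corresponds to the Moore loop $(\tilde\omega, 1)$ of length one, where $\tilde\omega(t) = \omega(t)$ for $t \le 1$ and $\tilde\omega(t) = *$ for $t \ge 1$ (this uses $\omega(1)=*$). This is a homeomorphism onto $\{(\varphi, r) \in \Omega^M(Y,*) : r = 1\}$, and it is plainly $G$-equivariant since $G$ acts only on the values of $\omega$.

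Next I would define $H \colon \Omega^M(Y,*) \times [0,1] \to \Omega^M(Y,*)$ by linearly interpolating the length down to $1$ and reparametrizing accordingly. Explicitly, set $\ell_s = (1-s) r + s$ and
$$H((\varphi, r), s) = (\psi_s, \ell_s), \qquad \psi_s(u) = \varphi\!\left(\tfrac{r}{\ell_s}\, u\right),$$
with the convention $\psi_s \equiv *$ when $\ell_s = 0$, which occurs only for the constant loop of length $0$ at $s=0$. One checks that $\psi_s$ is a genuine Moore loop of length $\ell_s$: the argument $ru/\ell_s$ exceeds $r$ exactly when $u > \ell_s$, where $\varphi$ is already $*$. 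At $s=0$ one has $\ell_0 = r$ and $\psi_0 = \varphi$, so $H(-,0)=\mathrm{id}$; at $s=1$ one has $\ell_1 = 1$, so $H(-,1)$ lands in $\Omega(Y,*)$; and if $r=1$ then $\ell_s \equiv 1$ and $\psi_s = \varphi$ for all $s$, so $H$ fixes $\Omega(Y,*)$ pointwise. Thus $H$ is a strong deformation retraction of $\Omega^M(Y,*)$ onto $\Omega(Y,*)$.

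The equivariance step is where the argument really pays off, and it is essentially free. Since $G$ acts trivially on the length coordinate and by post-composition on the path, $g\cdot(\varphi,r) = (g\circ\varphi, r)$, while the reparametrization $\varphi \mapsto \psi_s$ only \emph{pre}composes $\varphi$ with a map of the time variable $u$, on which $G$ acts trivially. Post-composition and pre-composition commute, so $H(g\cdot(\varphi,r),s) = g\cdot H((\varphi,r),s)$ and $H$ is a $G$-homotopy. Combined with the previous paragraph, $H$ is a $G$-equivariant strong deformation retraction, which is exactly the claim.

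The main obstacle is continuity of $H$. Away from the corner $\{r=0,\,s=0\}$ this follows from the exponential law: the adjoint map $[0,\infty) \times \Omega^M(Y,*) \times [0,1] \to Y$, $(u,(\varphi,r),s) \mapsto \varphi(ru/\ell_s)$, is the composite of the continuous map $(u,(\varphi,r),s) \mapsto (ru/\ell_s,\varphi)$ with evaluation, which is continuous in a compactly generated setting; continuity of $H$ then follows by adjunction since $[0,\infty)$ is locally compact Hausdorff. The only delicate point is the single corner $((*,0),0)$, where the ratio $r/\ell_s$ degenerates. Here I would argue directly: as $(\varphi,r,s) \to (*,0,0)$ one has $\ell_s \to 0$ in the length coordinate, while $\psi_s(u) = \varphi(ru/\ell_s)$ takes values in $\varphi([0,r])$, which is forced uniformly close to $*$ once $\varphi \to *$ in the compact-open topology and $r\to 0$. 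This disposes of the one bad point and completes the verification.
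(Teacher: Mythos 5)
Your proof is correct, but it organizes the retraction differently from the paper. The paper proceeds in two stages: first a homotopy that only moves the length coordinate, sliding every loop of length $r\le 1$ out to length $1$ (no reparametrization of the path at all, hence no division anywhere), which retracts $\Omega^M(Y,*)$ onto the subspace $\tilde\Omega(Y,*)=\{(\varphi,r): r\ge 1\}$; then the rescaling $K(s,(\varphi,r))=(\varphi_s,(1-s)r+s)$ with $\varphi_s(t)=\varphi\bigl(\tfrac{r}{(1-s)r+s}t\bigr)$ --- which is exactly your formula --- but applied only where $r\ge 1$, so the denominator satisfies $(1-s)r+s\ge 1$ and continuity is immediate from the exponential law. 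You instead apply the rescaling globally in a single step, which forces you to confront the degenerate point $((c_*,0),0)$ where the ratio $r/\ell_s$ is $0/0$; your resolution there is sound: the image of $\psi_s$ is contained in the image of $\varphi$, which lands in any prescribed neighborhood $U$ of $*$ as soon as $r\le 1$ and $\varphi$ lies in the subbasic compact-open neighborhood $W([0,1],U)$ of the constant loop, so $\psi_s$ lies in every subbasic neighborhood $W(K,U)$ of $c_*$. (One stylistic caveat: ``uniformly close'' presupposes a metric or uniformity on $Y$ that is not given; phrase it via compact-open subbasic sets as above and the argument is airtight.) The trade-off is clear: the paper's preliminary lengthening step buys formulas that are manifestly continuous everywhere at the cost of a two-stage composite, while your one-step homotopy is more economical but needs the bespoke point-set check at the corner. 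The equivariance observation is identical in both arguments and equally trivial: the homotopies act by reparametrizing time and adjusting length, on which $G$ acts trivially, so they commute with the post-composition action of $G$.
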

\begin{proof}
The argument from nonequivariant topology adapts effortlessly to the
equivariant setting.  What follows is essentially the argument from
Proposition 5.1.1 of \cite{MS}.

First consider $\tilde{\Omega}(Y,*) \subseteq \Omega^M(Y,*)$, the
subspace of all $(\varphi,t)$ with $t \geq 1$.  A deformation
retraction, $H$, of $\Omega^M(Y,*)$ onto $\tilde{\Omega}(Y,*)$ is
given by the following formulae:

\hspace{0.5in}$H(s,(\varphi,r)) = (\varphi, r+s)$ when $r+s \leq 1$

\hspace{0.5in}$H(s,(\varphi,r)) = (\varphi, 1)$ when $r \leq 1$ and
$r+s \geq 1$

\hspace{0.5in}$H(s,(\varphi,r)) = (\varphi, r)$ when $r \geq 1$

Now a deformation retraction $K$ from $\tilde{\Omega}(Y,*)$ to
$\Omega(Y,*)$ is given by the formula

\hspace{0.5in}$K(s,(\varphi,r)) = (\varphi_s, (1-s)r +s)$,

\noindent where $\varphi_s(t) = \varphi(\frac{r}{(1-s)r+s}t)$.

Notice that $H$ and $K$ are both equivariant deformation
retractions.

\end{proof}

Given any based $G$-space $(X,*)$, one can form the free $G$-monoid
$M(X,*)$ just as in the nonequivariant setting.  As a space, $M(X,*)
= \coprod X^n / \sim $.  Here, $\sim$ is the equivalence relation
generated by all the relations of the form

$(x_1, \dots , x_{i-1}, *, x_{i+1}, \dots , x_n ) \sim (x_1, \dots ,
x_{i-1}, x_{i+1}, \dots , x_n )$.

The $G$-action on $M(X,*)$ is inherited from the diagonal action of
$G$ on each of the products $X^n$.  Note that since the basepoint
$*$ is fixed by $G$, this action factors through the relation
$\sim$.

This free $G$-monoid on $(X,*)$ enjoys the universal property that
any based $G$-map $f \colon X \ra M$, where $M$ is any topological
$G$-monoid with $f(*) = e$, can be extended uniquely to a $G$-monoid
map $\tilde{f} \colon M(X,*) \ra M$.

$\Omega^M(Y,*)$ is a topological $G$-monoid.  The loop concatenation
product respects the $G$ action in the sense that $g \cdot
((\varphi,r) * (\psi,s)) = ((g \cdot \varphi) * (g \cdot \psi),
r+s)$.  The point $(*,0)$, where $*$ denotes the contant loop at the
base point of $Y$, is the identity element.

 Let $(X,*)$ be a based $G$-space. The equivariant James map is the $G$-map $J \colon (X,*) \ra (\Omega \Sigma X,*)$ given by $J(x)(t) = [t,x] \in \Sigma X$.  Here, the $G$-action is given by $(g \cdot J(x))(t) = [t,g\cdot x]$.  Compose this $G$-map with the inclusion of $\Omega \Sigma X$ into $\Omega^M \Sigma X$ to obtain a $G$-map $J \colon (X,*) \ra (\Omega^M \Sigma X,*)$ that does not carry the base point to the identity.  Let $\hat{X} = X \coprod [0,1] / (1\sim*)$ and define an extension $\hat{J}$ of $J$ to $\hat{X}$ by $\hat{J}(s) = (*,s)$ for $s \in [0,1]$ where $*$ denotes the constant path at the basepoint.  Note that $\hat{X}$ and $X$ are based $G$-homotopy equivalent if $X$ is a $G$-CW complex.  By now considering $0$ to be the basepoint of $\hat{X}$, $\hat{J}$ is now a based $G$-map.  This now extends uniquely to a $G$-map $\bar{J} \colon M(\hat{X},0) \ra \Omega^M \Sigma X$.  This is the map in James' theorem.

James' Theorem states that if $X$ is a connected CW complex, the map
$\bar{J} \colon M(\hat{X},0) \ra \Omega^M \Sigma X$ is a homotopy
equivalence.  See \cite{CM} for a proof of James' theorem.  This can
be easily extended to the equivariant setting in the case that $X$
has connected fixed sets.

 \begin{thm}[Equivariant James Theorem] If $X$ is a connected $G$-CW complex with $X^H$ connected for all $H \leq G$, the $G$-map $\bar{J} \colon M(\hat{X},0) \ra \Omega^M \Sigma X$ is a $G$-homotopy equivalence.

 \end{thm}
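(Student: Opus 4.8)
The plan is to reduce the equivariant assertion to the classical James theorem by passing to fixed sets and then invoking the equivariant Whitehead theorem. Recall that a $G$-map between $G$-spaces having the homotopy type of $G$-CW complexes is a $G$-homotopy equivalence precisely when the induced map $f^H$ on $H$-fixed sets is an ordinary homotopy equivalence for every subgroup $H \leq G$. Since the non-equivariant James theorem produces honest homotopy equivalences, it therefore suffices to understand $\bar{J}^H$ for each $H$.

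First I would identify the fixed sets of the source and target. Because the $G$-action on $\Sigma X$ is trivial in the suspension coordinate (recall $g\cdot[t,x]=[t,g\cdot x]$), one has $(\Sigma X)^H = \Sigma(X^H)$. As a Moore loop $(\varphi,r)$ is $H$-fixed exactly when $\varphi$ takes values in $(\Sigma X)^H$, this yields $(\Omega^M\Sigma X)^H = \Omega^M\Sigma(X^H)$. On the monoid side, the $G$-action on $M(\hat{X},0)$ is coordinatewise and the basepoint $0$ together with the attached interval are fixed; hence a word is $H$-fixed if and only if each of its letters lies in $(\hat{X})^H = \widehat{X^H}$, so that $M(\hat{X},0)^H = M(\widehat{X^H},0)$.

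Next I would check that under these identifications $\bar{J}^H$ is exactly the non-equivariant James map for the space $X^H$. This is immediate from naturality: the constructions $X \mapsto \Sigma X$, $X \mapsto M(\hat{X},0)$, and the maps $J$, $\hat{J}$, $\bar{J}$ all commute with passage to $H$-fixed points, so restricting $\bar{J}$ to $H$-fixed sets produces precisely the map $\bar{J}_{X^H}\colon M(\widehat{X^H},0) \to \Omega^M\Sigma(X^H)$ built from $X^H$.

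Finally, $X^H$ is an ordinary CW complex (being the fixed set of a $G$-CW complex) and is connected by hypothesis, so the classical James theorem applies and shows that $\bar{J}^H = \bar{J}_{X^H}$ is a homotopy equivalence for every $H \leq G$. By the equivariant Whitehead theorem, $\bar{J}$ is then a $G$-homotopy equivalence. I expect the only genuine subtlety to be the bookkeeping that identifies the fixed sets and the verification that both ambient spaces have the $G$-homotopy type of $G$-CW complexes, so that equivariant Whitehead is applicable; once the fixed-point computations above are in hand, the conclusion follows formally from the non-equivariant statement.
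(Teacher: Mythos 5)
Your proposal is correct and follows exactly the paper's own argument: identify $M(\hat{X},0)^H = M(\widehat{X^H},0)$ and $(\Omega^M \Sigma X)^H = \Omega^M \Sigma (X^H)$, recognize $\bar{J}^H$ as the classical James map for the connected CW complex $X^H$, and conclude via the equivariant Whitehead theorem. The only difference is that you spell out the naturality of the fixed-point identifications and the Whitehead step, which the paper leaves implicit.
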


\begin{proof}

Observe that $M(\hat{X},0)^H = M(\hat{X}^H,0)$ and $(\Omega^M \Sigma
X)^H = \Omega^M \Sigma (X^H)$.  Now, $\bar{J}^H \colon
M(\hat{X},0)^H \ra (\Omega^M \Sigma X)^H$ is a homotopy equivalence
by James' theorem since $X^H$ is a connected CW complex by
assumption.  Thus $\bar{J}$ is a $G$-homotopy equivalence.

\end{proof}

The space $J(X) = M(\hat{X},0)$ is called the James construction.
$J(X)$ is a free, associative, unital $G$-monoid.  If the basepoint
$*$ of $X$ is a vertex, then $J(X)$ has a natural $G$-CW complex
structure coming from the decomposition of $X^n$ as a product $G$-CW
complex.  Thus $J(X)$ has the following properties:

\begin{enumerate}
\item every element $v \in J(X)$ has a unique expression $v=*$ or $v=x_1x_2 \cdots x_n$, $x_i \in X \setminus *$ for $1 \leq i \leq n$.
\item $x_1 \cdots x_n$ is contained in a unique cell of $J(X)$, the cell $C_1 \times \cdots \times C_n$ where $x_i \in \text{Int}(C_i)$, $1 \leq i \leq n$, so that no indecomposable cell contains decomposable points, and
\item non-eqivariantly, the cell complex has the form of a tensor algebra $T(C_\#(X))$, where the sub complex $C_\#(X)$ is exactly the indecomposables, and the generating cells in dimension $i$ are in bijective correspondence with the cells in dimension $i+1$ of $\Sigma X$.

\end{enumerate}

\begin{rmk} The James construction given above uses the trivial one
dimensional representation for loops and suspensions.  This differs
from the construction in \cite{Rybicki} where the nontrivial one
dimensional $\Z/2$ representation is used.

\end{rmk}

Nonequivariantly, we have the Adams-Hilton construction as follows.
Let $Y$ be a CW complex with a single vertex $*$ and no 1-cells.
Then there is a model for $\Omega^M(Y)$ which is a free associative
monoid, with $*$ the only vertex, the generating cells in dimension
$i$ are in 1-1 correspondence with the $(i+1)$-dimensional cells of
$Y$, and it satisfies (2) above. This will generalize to the
following equivariant version.

\begin{thm}[Equivariant Adams-Hilton]  Let $Y$ be a $\text{Rep}(G)$-complex with a single vertex $*$, no 1-cells, and the only cells in higher dimensions are $V\oplus1$-cells where $V$ is a real representation of $G$ with all fixed sets of $S^V$ connected.  Then there is a model for $\Omega^M(Y)$ which is a free associative monoid, with $*$ the only vertex, the generating cells in dimension $V$ are in 1-1 correspondence with the $(V\oplus1)$-dimensional cells of $Y$, and it satisfies (2) above.
\end{thm}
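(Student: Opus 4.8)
The plan is to imitate the proof of the Equivariant James Theorem: construct a free associative unital $G$-monoid $M$ together with a $G$-map $M \to \Omega^M(Y)$, and then check it is a $G$-homotopy equivalence by passing to $H$-fixed sets, where the statement reduces to the nonequivariant Adams-Hilton construction. Two compatibilities drive this. First, Moore loops commute with fixed points, $(\Omega^M Y)^H = \Omega^M(Y^H)$, because $G$ acts trivially on $[0,\infty)$ (the same observation used for $\Sigma X$ in the James proof). Second, the $H$-fixed points of a free $G$-monoid are the free monoid on the $H$-fixed generating cells, the prototype being $M(\hat X,0)^H = M(\hat X^H,0)$ from the James proof. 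Since $G$-homotopy equivalences are detected on fixed sets, it suffices to produce the model so that on each $H$-fixed set it recovers the nonequivariant Adams-Hilton model of $Y^H$.

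First I would identify the fixed complexes $Y^H$. Each cell of $Y$ is a $(V\oplus 1)$-cell $D(V\oplus\R^{1,0})$, and its $H$-fixed points are $D(V^H\oplus\R^{1,0})$, an ordinary cell of dimension $\dim V^H+1$, since the trivial summand $\R^{1,0}$ is fixed. The hypothesis that every fixed set of $S^V$ is connected says exactly that $(S^V)^H = S^{V^H}$ is connected, i.e. $\dim V^H \geq 1$ for all $H$. Hence every fixed cell has dimension at least $2$, so $Y^H$ is an ordinary CW complex with a single vertex and no $1$-cells, in particular simply connected. This is precisely the input required by the nonequivariant Adams-Hilton construction, and it is the exact analogue of the hypothesis ``$X^H$ connected'' in the James theorem; this is where the connectivity assumption is used.

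Next I would build $M$ and the comparison map by induction on the skeleta $Y^{(n)}$ of the $\mathrm{Rep}(G)$-complex. Set $M^{(0)} = *$. When $Y^{(n)}$ is obtained from $Y^{(n-1)}$ by attaching cells $D(V_\alpha\oplus\R^{1,0})$ along $\Phi_\alpha \colon S(V_\alpha\oplus\R^{1,0}) \to Y^{(n-1)}$, I would form $M^{(n)}$ from $M^{(n-1)}$ by freely adjoining one generating $V_\alpha$-cell $D(V_\alpha)$ for each attached cell. The attaching datum is the adjoint of $\Phi_\alpha$: since $S(V_\alpha\oplus\R^{1,0})$ is the unreduced suspension of $S(V_\alpha)$, the map $\Phi_\alpha$ desuspends, through the James equivalence, to a $G$-map $S(V_\alpha) \to \Omega^M Y^{(n-1)}$ modelled on $M^{(n-1)}$, along which the boundary of the new generator is attached. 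A free product of free $G$-monoids is again free, so each $M^{(n)}$ is a free associative unital $G$-monoid; its product cell structure yields property (2) exactly as for the James construction, and the generating $V$-cells are tautologically in bijection with the $(V\oplus 1)$-cells of $Y$.

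Finally I would verify that $M^{(n)} \to \Omega^M Y^{(n)}$ is a $G$-homotopy equivalence by passing to fixed sets: $(M^{(n)})^H$ is the free monoid obtained from $(M^{(n-1)})^H$ by adjoining the fixed generating cells, while $(\Omega^M Y^{(n)})^H = \Omega^M((Y^{(n)})^H)$ and $(Y^{(n)})^H$ is $(Y^{(n-1)})^H$ with the $(V_\alpha^H\oplus 1)$-cells attached. By the second paragraph these satisfy the hypotheses of the nonequivariant Adams-Hilton construction, so the nonequivariant theorem (as in \cite{CM} and \cite{MS}) identifies $(M^{(n)})^H$ with its Adams-Hilton model and makes the map a homotopy equivalence; hence the $G$-map is a $G$-homotopy equivalence, and passing to the colimit over $n$ finishes the proof. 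I expect the main obstacle to be the inductive step itself: arranging the free extension $M^{(n-1)} \to M^{(n)}$ and its comparison map $G$-equivariantly and compatibly with all fixed sets at once, so that the nonequivariant inductive Adams-Hilton argument applies to every $Y^H$ simultaneously and the resulting equivalences assemble into a single equivariant one. The bookkeeping with representation-graded rather than integer-graded cells, and the verification that the desuspended attaching maps are genuinely equivariant, are the delicate points.
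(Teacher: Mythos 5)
Your construction coincides with the paper's---induction over skeleta, freely adjoining one generating $V_\alpha$-cell for each $(V_\alpha\oplus 1)$-cell along a lift of the adjointed attaching map, comparison map by prolongation, James construction at the bottom---but your verification of the inductive step goes a genuinely different way. The paper equivariantizes the machinery of \cite{CM}: its base case is the Equivariant James Theorem applied to $Y^{(2)}=\Sigma^1(\bigvee_\alpha S^{V_\alpha})$, and its inductive step asserts that prolongation is equivariant and that the quasifibration (Dold--Thom) arguments carry over to $G$-maps by \cite{WanerFibrations}. You instead push everything to fixed sets at the top level: using $(\Omega^M Y)^H=\Omega^M(Y^H)$, $M(X,*)^H=M(X^H,*)$, and the observation that the hypothesis $\dim V^H\geq 1$ makes each $Y^H$ a one-vertex CW complex with no $1$-cells, the comparison map restricts on $H$-fixed points to an instance of the nonequivariant Adams--Hilton construction, and equivariant Whitehead finishes. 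This is more economical---no equivariant quasifibration theory is developed---but two caveats keep it from being strictly more elementary. First, you must invoke the nonequivariant result at the level of its proof rather than its statement: the bare existence of \emph{some} free-monoid model for $\Omega^M(Y^H)$ does not make your particular map $(M^{(n)})^H\to\Omega^M((Y^{(n)})^H)$ an equivalence; what is needed, and what the argument in \cite{CM} does establish, is that the comparison map is an equivalence for \emph{any} admissible choice of lifted attaching maps, together with the check that your equivariant choices commute with passage to fixed sets---exactly the point you flag as delicate. Second, concluding a $G$-homotopy equivalence from fixed-point equivalences requires both sides to have the $G$-homotopy type of $G$-CW complexes (for $\Omega^M Y$ this is an equivariant Milnor-type theorem, again due to Waner), so Waner-type input reappears in a different place; this is, however, the same reduction the paper itself uses to prove its Equivariant James Theorem, so your proposal is consistent with the paper's own standard of rigor.
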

For the case $G= \Z/2$ the theorem becomes the following.

\begin{thm}[$\Z/2$-Equivariant Adams-Hilton]  Let $Y$ be a $\text{Rep}(\Z/2)$-complex with a single vertex $*$ and no $(n,n)$-cells or $(n,n-1)$-cells for $n \geq1$.  Then there is a model for $\Omega^M(Y)$ which is a free associative monoid, with $*$ the only vertex, the generating cells in dimension $(p,q)$ are in 1-1 correspondence with the $(p+1,q)$-dimensional cells of $Y$, and it satisfies (2) above.
\end{thm}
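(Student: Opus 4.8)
The plan is to obtain this statement as the specialization to $G=\Z/2$ of the general Equivariant Adams--Hilton theorem stated just above, so that the only thing really requiring argument is that the numerical hypothesis here---a single vertex and no $(n,n)$- or $(n,n-1)$-cells for $n\ge 1$---is an exact translation of the general hypothesis that $Y$ have a single vertex, no $1$-cells, and only $(V\oplus 1)$-cells with every fixed set of $S^V$ connected. Once the two hypotheses are matched, the conclusion, including property (2), is inherited verbatim.

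Concretely, I would first record the $V\oplus 1$ structure. A $\text{Rep}(\Z/2)$-cell of $Y$ of dimension $(p+1,q)$ is a disk on $\R^{p+1,q}=(\R^{1,0})^{p+1-q}\oplus(\R^{1,1})^{q}$, with $0\le q\le p+1$. It splits off a trivial summand, $\R^{p+1,q}=V\oplus\R^{1,0}$ with $V=\R^{p,q}$, exactly when $q\le p$; the excluded case $q=p+1$ is precisely a $(p+1,p+1)$-cell, that is an $(n,n)$-cell. So forbidding $(n,n)$-cells is exactly the requirement that every positive-dimensional cell be a $(V\oplus 1)$-cell.

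Next I would verify the connectivity clause by a fixed-point computation. Write $S^V$ for the representation sphere of $V=\R^{p,q}$, so that $(S^V)^{\Z/2}$ is the representation sphere of $V^{\Z/2}=(\R^{p,q})^{\Z/2}=\R^{\,p-q}$, giving $(S^V)^{\Z/2}=S^{\,p-q}$. Since the only subgroups of $\Z/2$ are $e$ and $\Z/2$, ``all fixed sets of $S^V$ connected'' means $S^V=S^{p,q}$ connected, i.e. $p\ge 1$, together with $S^{\,p-q}$ connected, i.e. $p-q\ge 1$, that is $q\le p-1$. The boundary case $q=p$ is a $(p+1,p)$-cell, that is an $(n,n-1)$-cell, so forbidding $(n,n-1)$-cells is exactly the fixed-point connectivity condition; moreover $q\le p-1$ forces $p\ge 1$ and hence cell dimension $p+1\ge 2$, which together with the $n=1$ instances of the two exclusions (the cells $(1,1)$ and $(1,0)$, the only $1$-dimensional representations) accounts for the absence of all $1$-cells. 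Conversely any allowed cell satisfies $q\le p-1$, hence $q\le p$, so it is a $(V\oplus 1)$-cell of the required type.

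With this dictionary in place the general theorem applies directly and yields a free associative monoid model for $\Omega^M(Y)$ whose generating cells in dimension $V=\R^{p,q}$, i.e. in bidegree $(p,q)$, correspond bijectively to the $(V\oplus 1)$-cells, i.e. the $(p+1,q)$-cells, of $Y$, and which satisfies property (2)---exactly the assertion. I do not expect a genuine obstacle here; the only real content is the identity $(S^V)^{\Z/2}=S^{\,p-q}$ and its connectivity, which is what turns the representation-theoretic hypothesis of the general theorem into the clean bidegree condition. The one point I would pin down carefully is that $S^V$ must be read as the representation sphere $V^{+}\cong S(V\oplus\R^{1,0})$, of dimension $\dim V$, rather than the unit sphere $S(V)$: only this reading produces the stated exclusion of $(n,n-1)$-cells, since the unit-sphere reading would instead exclude $(n,n-2)$-cells.
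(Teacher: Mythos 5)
Your proposal is correct, and it is faithful to the paper's framing: the paper explicitly presents the $\Z/2$ theorem as the $G=\Z/2$ instance of the general Equivariant Adams--Hilton theorem, and your dictionary between the two hypotheses is exactly right --- forbidding $(n,n)$-cells is equivalent to every positive-dimensional cell splitting as a $(V\oplus 1)$-cell, forbidding $(n,n-1)$-cells is equivalent to connectivity of $(S^V)^{\Z/2}=S^{p-q}$ (with connectivity of the underlying sphere $S^p$ then automatic), and the $n=1$ instances account for the absence of $1$-cells. Your insistence that $S^V$ denote the one-point compactification rather than the unit sphere is also the correct reading; it is confirmed by the paper's own identification $Y^{(2)}=\bigvee_{V_\alpha}S^{V_\alpha\oplus 1}=\Sigma^1(\bigvee_{V_\alpha}S^{V_\alpha})$. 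Where you genuinely diverge from the paper is in what the proof actually contains: the paper treats the hypothesis translation as immediate and never writes it down, and its proof environment instead sketches the proof of the general theorem itself, namely that the nonequivariant Adams--Hilton argument of \cite{CM} adapts equivariantly, with the base case handled by the equivariant James theorem applied to $Y^{(2)}$ (precisely where connected fixed sets of the $S^{V_\alpha}$ enter) and the inductive step handled by the equivariant prolongation construction together with the quasifibration results of \cite{WanerFibrations}. So the two arguments are complementary rather than identical: yours makes rigorous the reduction step the paper glosses over while taking the general theorem as a black box, whereas the paper's proof supplies the homotopy-theoretic content of that black box. Since the general theorem is stated as a theorem immediately above, your reduction is a complete and legitimate proof of the $\Z/2$ statement.
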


\begin{proof}
With these restrictions on the types of cells in our
$\text{Rep}(G)$-complex, the proof of the Adams-Hilton theorem in
\cite{CM} adapts to the equivariant case.  For example, in the base
case of the inductive argument, one has that the 2-skeleton
$Y^{(2)}=\bigvee_{V_\alpha}S^{V_\alpha \oplus 1}=\Sigma^1(
\bigvee_{V_\alpha}S^{V_\alpha})$.  Since each $S^{V_\alpha}$ has
connected fixed sets, the equivariant James construction applies and
the result is immediate.

For the inductive step, the prolongation construction is equivariant and
so by \cite{WanerFibrations} the quasifibering arguments extend to the equivariant setting.  This allows the remainder of the argument to adapt to the equivariant setting.
\end{proof}


One application of this model is the computation of $H^{*,*}(\Omega
S^{p,q}; \underline{\Z/2})$ when $S^{p,q}$ has a connected fixed set
and $p \geq 2$.

\begin{prop}
If $S^{p,q}$ is equivariantly 1-connected, then $H^{*,*}(\Omega
S^{p,q}; \underline{\Z/2})$ is an exterior algebra over $H^{*,*}(pt;
\underline{\Z/2})$ on generators $a_1, a_2, \dots$, where $a_{i} \in
H^{(p-1)\cdot 2^{i-1},q\cdot 2^{i-1}}(\Omega
S^{p,q};\underline{\Z/2})$.
\end{prop}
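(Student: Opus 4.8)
The plan is to identify $\Omega S^{p,q}$ with a James construction, read off a free $H^{*,*}(pt;\underline{\Z/2})$-module structure from the resulting cell decomposition, and then pin down the ring structure by comparison with the underlying nonequivariant loop space.

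First I would write $S^{p,q}=\Sigma^{1,0}S^{p-1,q}$, the untwisted suspension used in the James construction of this section. Equivariant $1$-connectivity of $S^{p,q}$ forces $p\geq 2$ and $p-q\geq 2$, so $S^{p-1,q}$ is a connected $G$-CW complex whose fixed set $(S^{p-1,q})^{\Z/2}=S^{p-1-q}$ is connected. The Equivariant James Theorem then gives a $G$-homotopy equivalence $J(S^{p-1,q})=M(\widehat{S^{p-1,q}},0)\simeq \Omega^M S^{p,q}\simeq \Omega S^{p,q}$, the last equivalence coming from the deformation retraction $\Omega(Y,*)\hookrightarrow\Omega^M(Y,*)$ proved above. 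Since $S^{p-1,q}$ has a single cell off the basepoint, property (2) of the James construction equips $J(S^{p-1,q})$ with exactly one cell in each dimension $k\cdot(p-1,q)=(k(p-1),kq)$ for $k\geq 0$, the $k$-th cell being the $k$-fold product of the indecomposable cell.

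Next I would establish the module structure. Each cell $\R^{k(p-1),kq}$ has connected fixed sphere $S^{k(p-1-q)}$ (using $p-1-q\geq 1$), so by the freeness results of \cite{freeness} the cohomology $H^{*,*}(\Omega S^{p,q};\underline{\Z/2})$ is a free $H^{*,*}(pt)$-module with a single generator $b_k$ in each dimension $(k(p-1),kq)$, with $b_0=1$. Setting $a_i=b_{2^{i-1}}$ puts $a_i\in H^{(p-1)2^{i-1},q2^{i-1}}(\Omega S^{p,q};\underline{\Z/2})$ as required.

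It remains to determine all products, and this is the step I expect to be the main obstacle. The key is a degree vanishing: since $p-q\geq 2$, for every $s\geq 1$ the group $H^{s(p-1),sq}(pt;\underline{\Z/2})$ is zero, because a top-cone monomial $\rho^a\tau^b$ would force $b=-s(p-1-q)<0$ while the bottom cone lives in $p\leq 0$. Consequently, writing an element of $H^{k(p-1),kq}(\Omega S^{p,q};\underline{\Z/2})$ as $\sum_{j}c_jb_j$ with $c_j\in H^{(k-j)(p-1),(k-j)q}(pt)$, all coefficients with $j<k$ vanish and $c_k\in H^{0,0}(pt)=\Z/2$; thus the group is exactly $\Z/2\cdot b_k$, so every product of the $a_i$ landing there is a $\Z/2$-multiple of $b_k$. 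That multiple is detected by restriction to the underlying nonequivariant cohomology $H^{*}(\Omega S^{p};\Z/2)$, since $H^{0,0}(pt)\to H^0(pt;\Z/2)$ is an isomorphism and each $b_k$ restricts to the nonzero divided-power class $\gamma_k$. Nonequivariantly $H^{*}(\Omega S^{p};\Z/2)\cong\bigotimes_{i\geq 0}\Lambda[\gamma_{2^{i}}]$ with $a_i\mapsto\gamma_{2^{i-1}}$, so $a_i^2\mapsto\gamma_{2^{i-1}}^2=0$ gives $a_i^2=0$, while a squarefree product $a_{i_1}\cdots a_{i_r}$ maps to the nonzero class $\gamma_{k}$ with $k=\sum_s 2^{i_s-1}$ and hence equals $b_k$. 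As the squarefree monomials in the $a_i$ biject with the module generators $b_k$ via binary expansion, the natural map $\Lambda_{H^{*,*}(pt)}[a_1,a_2,\dots]\to H^{*,*}(\Omega S^{p,q};\underline{\Z/2})$ matches free bases and is therefore an isomorphism of $H^{*,*}(pt)$-algebras. The only delicate point is the degree-vanishing bookkeeping ruling out $H^{*,*}(pt)$-correction terms; the rest is forced by freeness and the nonequivariant comparison.
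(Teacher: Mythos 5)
Your proposal is correct, and it reaches the ring structure by a genuinely different route than the paper. The first half is essentially the same in both: you use the equivariant James model $J(S^{p-1,q})\simeq \Omega S^{p,q}$ where the paper invokes its equivariant Adams--Hilton theorem for $\Omega^M S^{p,q}$ (the two coincide for a suspension), and both conclude that $H^{*,*}(\Omega S^{p,q};\underline{\Z/2})$ is a free $H^{*,*}(pt)$-module with one generator $b_k$ in each degree $(k(p-1),kq)$ from the collapsing cellular filtration. The divergence is in computing products. The paper feeds the path--loop fibration $\Omega S^{p,q}\ra PS^{p,q}\ra S^{p,q}$ into its own spectral sequence (Theorem \ref{thm:localss}): since the target is $H^{*,*}(pt)$, transgression-type differentials are forced, and $a_1^2=0$ follows because $d_2(a_1^2)=0$ in characteristic $2$ while the zero-column class must die; the general case is then left to a sketched induction, carried out only for $(p,q)=(4,2)$. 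You instead exploit sparsity of the coefficients, $H^{s(p-1),sq}(pt)=0$ for $s\geq 1$ (your cone bookkeeping here is right, using $p-q\geq 2$), so that each group $H^{k(p-1),kq}(\Omega S^{p,q})$ is exactly $\Z/2\cdot b_k$, and then detect coefficients via the ring map $\psi$ to the underlying nonequivariant cohomology $H^*(\Omega S^p;\Z/2)\cong \bigotimes_i \Lambda[\gamma_{2^i}]$. Your route never touches the Serre spectral sequence --- which is the point of this section of the paper, so a stylistic loss but not a mathematical one --- and in exchange it is uniform in $(p,q)$ and determines all products at once rather than by induction. The one assertion you leave unproved is that $\psi(b_k)\neq 0$; this does need an argument, but a standard one suffices: the cofiber sequence $\Z/2_+\ra S^{0,0}\ra S^{1,1}$ identifies $\ker\psi$ in each bidegree with $\rho\cdot H^{*-1,*-1}(\Omega S^{p,q})$, and a basis element of a free $H^{*,*}(pt)$-module cannot lie in $\rho\cdot H^{*,*}$ since $\rho$ is not invertible; as $H^{k(p-1)}_{sing}(\Omega S^p;\Z/2)=\Z/2\cdot\gamma_k$, nonvanishing forces $\psi(b_k)=\gamma_k$, which is exactly what your detection step requires.
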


\begin{proof}[Sketch of proof.]  For each value of $p$ and $q$, the argument is similar, so let's focus on the case $p=4$ and $q=2$ to compute $H^{*,*}(\Omega S^{4,2})$.

Now, since the fixed set of $S^{4,2}$ is connected, by the
Adams-Hilton construction we have an upperbound for the cohomology
of the loop space given in Figure \ref{fig:loops42}.

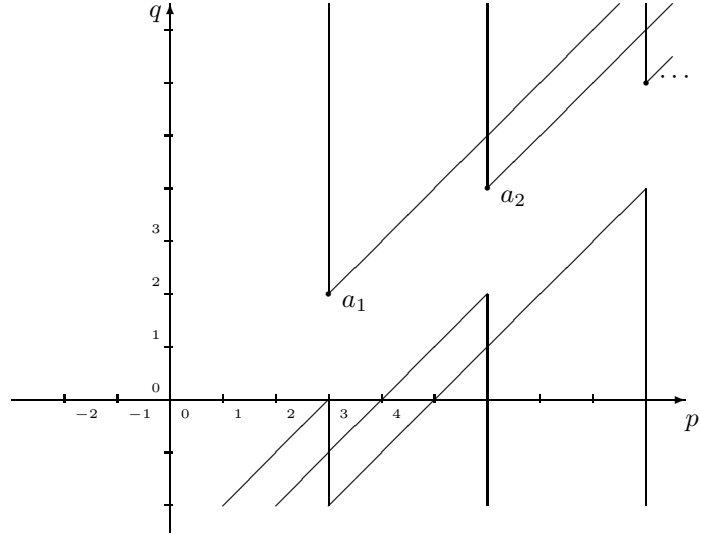
\begin{figure}[htpb]
\centering
\begin{picture}(330,230)(-150,-110)
\put(-110,-50){\vector(1,0){255}}
\put(-50,-100){\vector(0,1){200}}

\put(10, -10){\line(0,1){110}} \put(10, -10){\line(1,1){110}}
\put(10, -50){\line(0,-1){40}} \put(10, -50){\line(-1,-1){40}}
\put(10, -10){\circle*{2}} \put(15, -15){$a_1$}

\put(70, 30){\line(0,1){70}} \put(70, 30){\line(1,1){70}} \put(70,
-10){\line(0,-1){80}} \put(70, -10){\line(-1,-1){80}} \put(70,
30){\circle*{2}} \put(75, 25){$a_2$}

\put(130, 70){\line(0,1){30}} \put(130, 70){\line(1,1){10}}
\put(130, 30){\line(0,-1){120}} \put(130, 30){\line(-1,-1){120}}
\put(130, 70){\circle*{2}} \put(135,70){$\cdots$}

\multiput(-90,-51)(20,0){12}{\line(0,1){3}}
\multiput(-52,-90)(0,20){10}{\line(1,0){3}}
\put(-46,-57){$\scriptscriptstyle{0}$}
\put(-26,-57){$\scriptscriptstyle{1}$}
\put(-6,-57){$\scriptscriptstyle{2}$}
\put(14,-57){$\scriptscriptstyle{3}$}
\put(34,-57){$\scriptscriptstyle{4}$}
\put(-66,-57){$\scriptscriptstyle{-1}$}
\put(-86,-57){$\scriptscriptstyle{-2}$}
\put(-57,-47){$\scriptscriptstyle{0}$}
\put(-57,-27){$\scriptscriptstyle{1}$}
\put(-57,-7){$\scriptscriptstyle{2}$}
\put(-57,13){$\scriptscriptstyle{3}$}

\put(-58,95){${q}$} \put(145,-60){${p}$}
\end{picture}

\caption{The $E_1$ page of the cellular spectral sequence for
$\Omega S^{4,2}$.} \label{fig:loops42}
\end{figure}

In the spectral sequence of the filtration, it is clear that all
differentials must be zero, and so Figure \ref{fig:loops42} reveals
the structure of $H^{*,*}(\Omega S^{4,2})$ as a free
$H^{*,*}(pt)$-module.  Denote the generators of $H^{3\cdot
2^{i-1},2\cdot 2^{i-1}}(\Omega S^{p,q};\underline{\Z/2})$ by $a_i$.

Consider the path-loop fibration $\Omega S^{4,2} \ra PS^{4,2} \ra
S^{4,2}$.  The base is 1-connected, so we can apply the spectral
sequence of Theorem \ref{thm:localss}, which will converge to the
cohomology of a point since the total space $PS^{4,2} \simeq pt$.
Consider first the $r=2$ portion of the spectral sequence.

To fill in the entries in the spectral sequence, the Mackey functors
$\underline{H}^{q,2}(\Omega S^{4,2})$ need to be computed for
various values of $q$.  These can be obtained from the module
structure above.  The calculations yield that
$\underline{H}^{0,2}(\Omega S^{4,2}) = \underline{H}^{3,2}(\Omega
S^{4,2}) = \underline{\Z/2}$, $\underline{H}^{1,2}(\Omega S^{4,2})
=\underline{H}^{2,2}(\Omega S^{4,2}) = \langle\Z/2\rangle$, and
$\underline{H}^{4,2}(\Omega S^{4,2}) =\underline{H}^{5,2}(\Omega
S^{4,2}) =0$.  The Mackey functor $\underline{H}^{6,2}(\Omega
S^{4,2})$ is dual to $\underline{\Z/2}$, though this information
will not be needed.

Given the above Mackey functors, we have that the $q=0$ and $q=3$
rows are $H^{*,0}(S^{4,2};\Z/2)$, the $q=1$ and $q=2$ rows are
$H^*_{sing}(S^2;\Z/2)$, and the $q=4$ and $q=5$ rows are entirely
zeroes.  Thus the spectral sequence is as shown in Figure
\ref{fig:loops42r2ss}.

\begin{figure}[htpb]
\centering
\begin{picture}(340,180)(-30,-10)
\multiput(0,0)(50,0){6}{\line(0,1){150}}
\multiput(0,0)(0,20){8}{\line(1,0){280}}
\put(15,5){$\Z/2$} \put(15,25){$\Z/2$} \put(15,45){$\Z/2$}
\put(15,65){$\Z/2$} \put(20,85){0} \put(20,105){0} \put(15,125){??}

\put(75,5){0} \put(75,25){0} \put(75,45){0} \put(75,65){0}
\put(75,85){0} \put(75,105){0} \put(75,125){??}

\put(125,5){0} \put(120,25){$\Z/2$} \put(120,45){$\Z/2$}
\put(125,65){0} \put(125,85){0} \put(125,105){0} \put(125,125){??}

\put(175,5){0} \put(175,25){0} \put(175,45){0} \put(175,65){0}
\put(175,85){0} \put(175,105){0} \put(175,125){??}

\put(220,5){$\Z/2$} \put(225,25){0} \put(225,45){0}
\put(220,65){$\Z/2$} \put(225,85){0} \put(225,105){0}
\put(225,125){??}

\put(265,5){0} \put(265,25){0} \put(265,45){0} \put(265,65){0}
\put(265,85){0} \put(265,105){0} \put(265,125){??}

\put(-2,-2){$\bullet$} \thicklines \put(0,0){\vector(0,1){155}}
\put(0,0){\vector(1,0){285}}

\put(20,-10){0} \put(75,-10){1} \put(125,-10){2} \put(175,-10){3}
\put(225,-10){4}

\put(-10,5){0} \put(-10,25){1} \put(-10,45){2} \put(-10,65){3}
\put(-10,85){4} \put(-10,105){5} \put(-10,125){6}

\put(280,-10){$p$} \put(-10,150){$q$} \thinlines
\end{picture}

\caption{The $r=2$ spectral sequence for $\Omega S^{4,2} \ra
PS^{4,2} \ra S^{4,2}$.} \label{fig:loops42r2ss}
\end{figure}

Since the total space of the fibration is contractible, the spectral
sequence converges to $H^{p+q,2}(pt)$.  Since $H^{4,2}(pt) =0$,
there must be a nontrivial differential $d_2 \colon E^{0,3} \ra
E^{2,2}$ sending the generator $a_1 \in
H^{0,0}(S^{4,2};\underline{H}^{3,2}(\Omega S^{4,2}))$ to the
generator $z \in H^{2,0}(S^{4,2};\underline{H}^{2,2}(\Omega
S^{4,2}))$.

Now, the products $a_1^2$ and $a\cdot z$ live in the $r=4$ spectral
sequence and so to determine the differentials on $a_1^2$, we need
the picture of that spectral sequence.  This is shown in Figure
\ref{fig:loops42r4ss}.

\begin{figure}[htbp]
\centering
\begin{picture}(340,180)(-30,-10)
\multiput(0,0)(50,0){6}{\line(0,1){150}}
\multiput(0,0)(0,20){8}{\line(1,0){280}}
\put(15,5){$\Z/2$} \put(15,25){$\Z/2$} \put(15,45){$\Z/2$}
\put(10,65){$(\Z/2)^2$} \put(10,85){$(\Z/2)^2$} \put(15,105){$\Z/2$}
\put(15,125){$\Z/2$}

\put(75,5){0} \put(75,25){0} \put(75,45){0} \put(75,65){0}
\put(75,85){0} \put(75,105){0} \put(75,125){0}

\put(125,5){0} \put(120,25){$\Z/2$} \put(120,45){$\Z/2$}
\put(120,65){$\Z/2$} \put(115,85){$(\Z/2)^2$} \put(120,105){$\Z/2$}
\put(125,125){0}

\put(175,5){0} \put(175,25){0} \put(175,45){0} \put(175,65){0}
\put(175,85){0} \put(175,105){0} \put(175,125){0}

\put(220,5){$\Z/2$} \put(225,25){0} \put(225,45){0}
\put(220,65){$\Z/2$} \put(225,85){0} \put(225,105){0}
\put(220,125){$\Z/2$}

\put(265,5){0} \put(265,25){0} \put(265,45){0} \put(265,65){0}
\put(265,85){0} \put(265,105){0} \put(265,125){0}

\put(-2,-2){$\bullet$} \thicklines \put(0,0){\vector(0,1){155}}
\put(0,0){\vector(1,0){285}}

\put(20,-10){0} \put(75,-10){1} \put(125,-10){2} \put(175,-10){3}
\put(225,-10){4}

\put(-10,5){0} \put(-10,25){1} \put(-10,45){2} \put(-10,65){3}
\put(-10,85){4} \put(-10,105){5} \put(-10,125){6}

\put(280,-10){$p$} \put(-10,150){$q$} \thinlines
\end{picture}

\caption{The $r=4$ spectral sequence for $\Omega S^{4,2} \ra
PS^{4,2} \ra S^{4,2}$.} \label{fig:loops42r4ss}
\end{figure}

Since $H^{7,4}(pt)=0$, there must be a nontrivial differential $d_2
\colon E^{0,6} \ra E^{2,5}$ sending the generator $a_2$
isomorphically to $a \cdot z$.  Since $d_2(a_1^2) = 0$, it must be
that $a_1^2 = 0$.    An inductive argument will show that the ring
structure is indeed that of an exterior algebra with the specified
generators.
\end{proof}

\newpage
\bibliographystyle{alpha}
\bibliography{references}

\end{document}